\setlist[itemize]{leftmargin=20pt,label=$-$}
\setlist[enumerate]{leftmargin=20pt,label=(\roman*)}
\newtheoremstyle{customthm}
{}
{}
{\itshape}
{}
{\scshape}
{.}
{.5em}
{}
\newtheoremstyle{customdef}
{}
{}
{}
{}
{\scshape}
{.}
{.5em}
{}
\theoremstyle{customthm} %
\newtheorem{theorem}{Theorem}[section] %
\theoremstyle{customdef} %
  \newtheorem{definition}[theorem]{Definition} %
  \newtheorem{proposition}[theorem]{Proposition} %
  \newtheorem{corollary}[theorem]{Corollary} %
  \newtheorem{lemma}[theorem]{Lemma} %
  \newtheorem{example}[theorem]{Example} %
\theoremstyle{remark}
\newtheorem{remark}[theorem]{Remark}
\begin{document}

\title
  [Varieties and equations]         
  {A categorical view of varieties and equations} 

\author{Jose Avila}
\email{jose.avila@correounivalle.edu.co}  

\thanks{\textit{2020 Mathematics Subject Classification} 03C05, 08Bxx, 08C05, 18A05, 18A20, and 18C05. \\
\textit{Keywords:} Varieties, equations, categories of algebras, inserters, and Birkhoff’s variety theorem \\
The author is grateful to Guillermo Ortiz and Sergio Troncoso for their useful comments and suggestions.}

\begin{abstract}
    We present a common framework to study varieties  in great generality from a categorical point of view. The main application of this study is in the setting of algebraic categories, where we introduce Birkhoff varieties which are essentially subvarieties of algebraic categories, and we get a generalization of Birkhoff's variety theorem. In particular, we show that Birkhoff varieties are coreflexive equalizers. The key of this generalization is to give a more general concept of equation for subvarieties of algebraic categories. In order to get our characterization of Birkhoff varieties, we study inserters over algebraic categories, where we generalize some well-known results of algebras for finitary endofunctors over \textit{Set}. By duality, we obtain a characterization of cosubvarieties of coalgebraic categories. Surprisingly, these cosubvarieties turn to be varieties according to our theory of varieties. 
\end{abstract}

\maketitle

\section{Introduction}

Varieties could be defined as the set of solutions of a system of equations. Equations appear at everywhere in mathematics. In general, an equation is a formula $\varphi(x) \approx \psi(x)$ where $\varphi(x),\psi(x)$ are expressions in function on the same object $x$, which belongs to some domain of interest. In other words, an equation is a pair of parallel functions. Given a system of equations $$E=\{\varphi_i,\psi_i : S \to B_i \}_{i \in I},$$ the variety defined by $E$ is 
\[
V = \{ a \in S \mid \varphi_i(a) = \varphi_i(a) \, \text{for all} \, i \in I \} . 
\]
Observe that if $W$ is a collection of solutions of the equations $\varphi_i \approx \psi_i$, then $W \subset V$. More generally, let $\iota: V \hookrightarrow S$ be the inclusion of $V$ into $S$. If $f:C \to S$ is a function such that $\varphi_i(f(x)) = \psi_i(f(x))$ for all $i \in I$ and $x \in C$, i.e., $\varphi_i f = \psi_i f$ for all $i \in I$, then there exists a unique function $g: C \to V$ such that $f = \iota g$. This universal property of the inclusion $\iota$ with respect to $E$ defines $V$. Indeed, any other function $\zeta: V' \to S$ with this property is a re-indexation of $V$, what we mean is that $\zeta$ is one-to-one and its image is $V$. To the functions with this property we could to call them variety functions defined by $E$.

The considerations above apply to the category $Set$ of sets and functions. However, it is already clear, by the above discussion, how to generalize these concepts to any other category. We do this in Section \ref{Abstract equations and varieties}. This study naturally extends to covarieties by duality. It must been said, the aim of this note is to give a common framework to study varieties, in search to better understanding of these objects. 

A similar systematic study of equations is in unification theory, a subject of computer science and logic, see \cite{EDER198531,Lassez88}. Most general unifiers are closed related to variety morphisms: both objects represent the set of all solutions of a given system of equations, and any two most general unifiers (resp. variety morphisms) of a same system of equations are isomorphic. Although, in general, most general unifiers fail to give a unique representation of the solutions, for example, ${\theta=\{x \mapsto a,\, y \mapsto b\}}$ is a most general unifier of the equation ${f(x,b) \approx f(a,y)}$, and $\theta$ is idempotent, so $\theta = \theta \circ \theta$ and $\theta = \theta \circ \iota$, but $\theta \neq \iota$, where $\iota$ is the identity substitution.

There are many classes of well-known varieties. We give special attention to varieties of algebras. In universal algebra, varieties are equationally presentable collections of (one or many-sorted) algebras. In the case of one-sorted algebras, varieties are characterized by Birkhoff's variety theorem \cite{birkhoff_1935}, as classes of algebras closed under homomorphic images, subalgebras, and products, also called HSP classes of algebras.

A significant step in generalizing these algebraic structures is in the context of algebraic categories. Algebraic theories and its algebras were defined by F. W. Lawvere in his doctoral dissertation \cite{Lawvere63}. An excellent modern account of algebraic theories, and general algebra, is given in \cite{adámek_rosický_vitale_lawvere_2010}. The following characterizations, see \cite[Theorem 6.9]{adámek_rosický_vitale_lawvere_2010}, of algebraic categories correspond to generalizations of the concepts introduced by Lawvere. Let $\mathcal{A}$ be a locally small category. Then the following conditions are equivalent:

\begin{itemize}
    \item  $\mathcal{A}$ is algebraic, i.e.,  equivalent to $Alg \, \mathcal{T}$, for some algebraic theory $\mathcal{T}$. 

    \item $\mathcal{A}$ is cocomplete and has a set $\mathcal{G}$ of perfectly presentable objects such that every object of $\mathcal{A}$ is a sifted colimit of objects of $\mathcal{G}$. 

    \item $\mathcal{A}$ is cocomplete and has a strong generator $\mathcal{G}$ consisting of perfectly presentable objects. 
\end{itemize}

Recall that an algebraic theory is a small category with finite products. We denote by \textbf{AlgTh} to the category of algebraic theories. The morphisms between algebraic theories are finite product preserving functors. Likewise, we denote by \textbf{AlgCat} to the category of algebraic categories, with algebraic functors as morphisms, where a functor between algebraic categories is algebraic if preserves limits and sifted colimits. Algebraic categories have many notable properties. They are locally finitely presentable, in particular, they are complete, wellpowered and cowellpowered. They have regular factorizations, and regular epimorphisms in such categories are stable under pullbacks and products. The perfectly presentable (also called strongly presentable) objects of an algebraic category are precisely the finitely presentable regular projective objects. 

In algebraic categories there is also a Birkhoff's variety theorem, see \cite[Theorem 10.22]{adámek_rosický_vitale_lawvere_2010}. The subvarieties of an algebraic category $\mathcal{A}$ are precisely the full subcategories of $\mathcal{A}$ closed under regular quotients, subalgebras, products, and direct unions. The assumption of closure under directed unions cannot be
omitted, see \cite[Example 10.23]{adámek_rosický_vitale_lawvere_2010} for a counterexample in the case $\mathcal{A} = Set^\mathbb{N}$. However, subvarieties are defined in first place by equations, but equations are defined only in algebraic categories of the form $Alg \, \mathcal{T}$. Here, we present an alternative definition of equation 
which does not depend on the presentation of $\mathcal{A}$, and corresponds to the theory presented in Section \ref{Abstract equations and varieties}. 

This lead us to introduce Birkhoff varieties and Lawvere covarieties, which are defined by Birkhoff and Lawvere equations, respectively.  A Birkhoff equation, see Definition \ref{birkhoff equation}, is an equation $P \approx Q$ in \textbf{AlgCat} such that there exists an algebraic, faithful, conservative, and amnestic functor $U$ such that $UP = UQ$. On the other hand, a Lawvere equation, see Definition \ref{Lawvere equation}, is an equation $P \approx Q$ in \textbf{AlgTh} such that there exists a morphism of theories $U$ surjective on objects such that $PU = QU$. Our main results are the following characterizations. 

{
\theoremstyle{customthm}
\newtheorem{BirkhoffVar}[theorem]{Characterization of Birkhoff varieties}
}
\begin{BirkhoffVar}\label{Characterization of Birkhoff varieties}
    Let $F:\mathcal{B} \to \mathcal{A}$ be an algebraic functor. Then the following conditions are equivalents:
    \begin{itemize}
        \item $F$ is a Birkhoff variety, i.e., a general solution of some system of Birkhoff equations.
        \item $F$ is isomorphic to the inclusion $\mathcal{V} \hookrightarrow \mathcal{A}$, for some subvariety $\mathcal{V}$ of $\mathcal{A}$.
        \item $F$ is a coreflexive equalizer of some coreflexive Birkhoff equation. 
    \end{itemize}
\end{BirkhoffVar}

{
\theoremstyle{customthm}
\newtheorem{LawvereVar}[theorem]{Characterization of Lawvere covarieties}
}
\begin{LawvereVar}\label{Characterization of Lawvere covarieties}
    Let $M:\mathcal{T} \to \mathcal{Q}$ be a morphisms of algebraic theories. Then the following conditions are equivalents:
    \begin{itemize}
        \item $M$ is a Lawvere covariety, i.e., a general cosolution of some cosystem of Lawvere equations.
        \item $M$ is isomorphic to the canonical morphism $\mathcal{T} \to \mathcal{T}/\sim$, for some congruence $\sim$ (in the sense of \cite[Definition 10.4]{adámek_rosický_vitale_lawvere_2010}) on $\mathcal{T}$.
        \item $M$ is a reflexive coequalizer of some reflexive Lawvere equation.
    \end{itemize}
\end{LawvereVar}

Moreover, we prove that every system of Birkhoff equations (resp. every cosystem of Lawvere equations) has a general solution (resp. a general cosolution). Also, we give another characterization of Birkhoff varieties in terms of Lawvere covarieties, see Theorem \ref{Lawvere Birkhoff varieties}. These similarities between Birkhoff varieties and Lawvere covarities are to be expected since \textbf{AlgCat} and \textbf{AlgTh} are almost dual to each other, see \cite[Theorem 9.15]{adámek_rosický_vitale_lawvere_2010}.

In order to get our characterization of Birkhoff varieties, we study inserters over algebraic categories. Inserters have been studied in \cite{adamek_rosicky_1994} for the quasicategory of all categories and all functors, and more generally in \cite{BIRD19891} for $2$-categories. We consider them only in the \textbf{Cat} setting, for \textbf{Cat} the non-locally small category of all locally small categories and all functors. Inserters are a straight generalization of algebras and coalgebras for endofunctors. Both algebras and coalgebras are very well-known, and they are dual to each other. Algebras (for an endofunctor) model common algebraic structures like one-sorted algebras, and coalgebras model structures like deterministic automata. In particular, algebras for finitary endofunctors over $Set$ have been studied in \cite[Chapter 12]{adámek_rosický_vitale_lawvere_2010}. The results there have generalizations, as noted at the beginning and the end of that Chapter, for locally finitely presentable  and algebraic categories. We recover some of these results for inserters over algebraic categories in Section \ref{inserters section}. The key idea is to give conditions under which inserters are concretely isomorphic to algebras or coalgebras, see Theorem \ref{insertes algebras and coalgebras}.

Our characterization of Birkhoff varieties is easily generalizable for coalgebraic categories. We just state this result. A concise definition of the category \textbf{CoAlgCat} of coalgebraic categories is the following: a locally small category $\mathcal{A}$ is coalgebraic if only if $\mathcal{A}^{\operatorname{op}}$ is algebraic, a functor $F:\mathcal{A} \to \mathcal{B}$ between coalgebraic categories is coalgebraic if only if $F^{\operatorname{op}}:\mathcal{A}^{\operatorname{op}} \to \mathcal{B}^{\operatorname{op}}$ is algebraic. Observe that the opposite functor
\[
F:\mathcal{A} \to \mathcal{B} \quad \longmapsto \quad F^{\operatorname{op}}: \mathcal{A}^{\operatorname{op}} \to \mathcal{B}^{\operatorname{op}}
\]
is an isomorphism between \textbf{CoAlgCat} and \textbf{AlgCat}. Let $\mathcal{V}$ be a full subcategory a coalgebraic category $\mathcal{A}$, we call $\mathcal{V}$ a cosubvariety of $\mathcal{A}$ if $\mathcal{V}^{\operatorname{op}}$ is a subvariety of $\mathcal{A}^{\operatorname{op}}$. A co-Birkhoff equation is an equation $P \approx Q$ in \textbf{CoAlgCat}  such that there exists a coalgebraic, faithful, conservative, and amnestic functor $U$ such that $UP = UQ$. Thus, the dual of Characterization of Birkhoff varieties \ref{Characterization of Birkhoff varieties} is:

{
\theoremstyle{customthm}
\newtheorem{coBirkhoffVar}[theorem]{Characterization of co-Birkhoff varieties}
}
\begin{coBirkhoffVar}\label{co-birkhoff varieties}
        Let $F:\mathcal{B} \to \mathcal{A}$ be a coalgebraic functor. Then the following conditions are equivalents:
    \begin{itemize}
        \item $F$ is a co-Birkhoff variety, i.e., a general solution of some system of co-Birkhoff equations.
        \item $F^{\operatorname{op}}$ is a Birkhoff variety. 
        \item $F$ is isomorphic to the inclusion $\mathcal{V} \hookrightarrow \mathcal{A}$, for some cosubvariety $\mathcal{V}$ of $\mathcal{A}$.
        \item $F$ is a coreflexive equalizer of some coreflexive co-Birkhoff equation. 
    \end{itemize}
\end{coBirkhoffVar}

In summary, what we have actually done in Characterization of Birkhoff varieties \ref{Characterization of Birkhoff varieties} is to characterize, in a precise sense (see Inverse main problem of varieties \ref{inversemainproblem} and Proposition \ref{unique solution to inverse problem}), the class of all equations in the context of \textbf{AlgCat}, which define subvarieties of algebraic categories. This gives us a generalization of Birkhoff's variety theorem \cite[Theorem 10.22]{adámek_rosický_vitale_lawvere_2010}. From this, we get a generalization of the dual of Birkhoff's variety theorem given in \cite{awodey2000coalgebraic}.  

\begin{remark}
    Our definition of covariety differs to the usual definition found in the literature, see for example \cite{awodey2000coalgebraic,GUMM200171}. What they call covarieties  in those papers is what we call cosubvarieties. We define covarieties, see Definition \ref{covariety definition}, as the categorical dual of varieties, see Definition \ref{variety definition}. Surprisingly, covarieties in the sense of \cite{awodey2000coalgebraic,GUMM200171}, turn to be varieties according to our definition of varieties, for the setting of coalgebraic categories. This is due to Characterization of co-Birkhoff varieties \ref{co-birkhoff varieties}. 
\end{remark}

\begin{remark}
    In no way this note is a complete account of varieties in their utmost generality. We mention many other examples, see Section \ref{examples}, like varieties for polynomial equations or differential equations.  Of course, there are still many open questions to be solved, we list some of these in the last section.
\end{remark}

\section{Abstract equations and varieties}\label{Abstract equations and varieties}

In this section we consider a (non-necessarily locally small) category $\mathcal{C}$.

\begin{definition}
    An \textbf{equation} is a pair $f,g$ of parallel arrows, i.e., a pair of morphisms with same source and target, which we denote by $f \approx g$. The \textbf{domain} of $f \approx g$ is the common domain of $f$ and $g$. A \textbf{system of equations} is a non-empty set of equations with same domain.
\end{definition}

\begin{definition}
    Let $E$ be a system of equations defined on $A$. A \textbf{solution} of (the equations in) $E$ is an arrow $a$ with target $A$ such that $f a = g a$ for all equations $f \approx g$ in $E$. We write $a \models E$ if $a$ is a solution of $E$. More generally, if $S$ is a set of solutions of $E$, we write $S \models E$. 
\end{definition}

\begin{definition}\label{variety definition}
    A \textbf{general solution} of a system of equations $E$ is a solution $v$ of $E$ with the property that for each solution $a$ of $E$ there exists a unique morphism $\tilde{a}$ such that $a = v \Tilde{a}$. We write $v \sdtstile{\text{g.s}}{} E$ to denote $v$ is a general solution of $E$.
    A \textbf{variety (morphism)} is a general solution of some system of equations. 
\end{definition}

It is clear that every variety is a monomorphism, and every regular monomorphism is a variety. Note that general solutions of a system of equations $E$ correspond to limits for the diagram in $\mathcal{C}$ defined by $E$. A general solution of $E$ gives a unique representation to the set of all its solutions.   

Given morphisms $f,g$, we write $f \leq g$ if only if $f = g h$ for some $h$. We said $f$ and $g$ are isomorphic if there exists an isomorphism $h$ such that $f = g h$. In particular, if $f$ and $g$ are monomorphisms, then $f$ and $g$ are isomorphic if only if $f \leq g$ and $g \leq f$. Similarly, if $E$ and $K$ are systems of equations with same domain, we write $E \implies K$ if every solution of $E$ is a solution of $K$, and we said $E$ is equivalent to $K$ if $E$ and $K$ have the same solutions, i.e., $E \implies K$ and $K \implies E$. Given two classes $\Sigma, \Theta$ of equations, we said $\Sigma$ and $\Theta$ are equivalents if each equation in $\Sigma$ is equivalent to some equation in $\Theta$, and each equation in $\Theta$ is equivalent to some equation in $\Sigma$. 

\begin{remark}
    It is clear that there is only one general solution for a given system of equations, up to isomorphism.
\end{remark}

Arrows act on equations by right composition, and this action extents to an action over system of equations. Explicitly, let $E$ be a system of equations with domain $A$ and let $g:B \to A$ be an arrow with target $A$, then 
$
Eg = \{ pg \approx qg \mid p \approx q \in E \} . 
$

Let $\Sigma$ be a class of equations. A $\Sigma$-variety is a general solution to some system of $\Sigma$-equations. We write $a \sdtstile{}{\Sigma} E$ if $a \models E$ and $E$ is a system of $\Sigma$-equations.

\begin{proposition} The following statements holds:
    \begin{enumerate}
        \item Let $E$ and $K$ be systems of equations with same domain and suppose that $v \sdtstile{\text{g.s}}{} E$ and $w \sdtstile{\text{g.s}}{} K$. Then $E \implies K$ is equivalent to $v \leq w$.

        \item Let $E$ be a system of equations with domain $A$ and $g:B \to A$. Then $a \models Eg$ if only if $ga \models E$.

        \item  Let $E$ and $K$ be system of equations with domain $A$ and $g:B \to A$. If $E \implies K$ then $Eg \implies Kg$.

        \item Varieties are strict monomorphisms. In particular, varieties are extremal monomorphisms. 
    \end{enumerate}
\end{proposition}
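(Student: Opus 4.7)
The plan is to dispatch (1)--(3) by direct manipulation of the universal property of general solutions, and to treat (4) by recognizing the variety $v$ as the joint equalizer of a canonical (possibly large) system induced by $v$ itself.

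For (1), I argue both directions by transport along the relevant universal property. If $E \implies K$, then the solution $v$ of $E$ is in particular a solution of $K$, so by the universal property of $w$ there is a unique $h$ with $v = w h$, giving $v \leq w$. Conversely, if $v = w h$, take any $a \models E$ and factor it as $a = v \tilde{a}$; then $a = w (h \tilde{a})$, and since $p w = q w$ for every $p \approx q$ in $K$, associativity yields $p a = q a$, so $a \models K$. Statements (2) and (3) then reduce to associativity of composition: $(p g) a = p (g a)$ and $(q g) a = q (g a)$ together give $a \models E g$ iff $g a \models E$, which is (2); and (3) follows by chaining (2) with the hypothesis $E \implies K$.

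The conceptual heart of the proposition is (4). Adopting the convention that a strict monomorphism is one which arises as the joint equalizer of some (possibly large) family of parallel pairs, I would observe the following. Given a general solution $v$ of $E$ defined on $A$, let $K$ be the collection of all parallel pairs $p \approx q$ with common domain $A$ such that $p v = q v$. Then $E \subseteq K$ because $v \models E$, so any $a$ satisfying $p a = q a$ for every $(p, q) \in K$ is in particular a solution of $E$, hence factors uniquely through $v$; meanwhile, $v$ itself is tautologically a solution of $K$. Therefore $v$ is the joint equalizer of $K$, i.e., a strict monomorphism. The only technical caveat is that $K$ may be a proper class, which is harmless because strictness is specified by a universal property rather than by the existence of a concrete limit object.

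To derive extremality, suppose $v = m e$ with $e$ epi. For any pair $(u, w)$ with $u v = w v$, we have $u m e = w m e$; cancelling the epi $e$ yields $u m = w m$. Thus $m$ equalizes every pair that $v$ equalizes, so by the strictness of $v$ there is a unique $s$ with $m = v s$. Then $v = m e = v s e$, and left-cancelling the mono $v$ (which is mono since it is a general solution) gives $s e = 1$; combined with $e$ being epi, this forces $e$ to be an isomorphism. I expect the only real obstacle is keeping track of the size issue in (4) and being explicit about which definition of strict monomorphism is in force.
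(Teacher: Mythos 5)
Your proposal is correct; the paper offers no argument for this proposition beyond ``It is clear,'' and your verifications are exactly the routine ones intended, including the standard observation that a general solution is a joint equalizer of the (possibly large) class of all parallel pairs it equalizes, which is precisely the universal property defining strict monomorphisms and from which extremality follows as you show.
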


\begin{proof}
    It is clear.
\end{proof}

\begin{proposition}\label{closed class of equations} Suppose $\Sigma$ is closed under right action by arrows. Then:
    \begin{enumerate}
        \item $\Sigma$-varieties are closed under intersections, products, and are stable under pullbacks.

        \item If $v w$ is a $\Sigma$-variety and $v$ is a monomorphism, then $w$ is also a $\Sigma$-variety. 
    \end{enumerate}
\end{proposition}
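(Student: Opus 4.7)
My plan is to realize each construction in (1) as a general solution of an explicitly built $\Sigma$-system of equations, relying on the closure of $\Sigma$ under right action together with items (2) and (3) of the previous proposition; for (2), I will produce the witnessing system by right-acting with the given monomorphism~$v$.

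For closure under intersections, suppose $\{v_i\colon V_i\to A\}_{i\in I}$ is a family of $\Sigma$-varieties with $v_i\sdtstile{\text{g.s}}{}E_i$. Then $E:=\bigcup_{i\in I}E_i$ is a $\Sigma$-system with domain $A$, and a morphism solves $E$ iff it solves every $E_i$ iff it factors through every $v_i$; this identifies the intersection $\bigcap_i v_i$ (where the limit exists) with the general solution of $E$. For products, given $v_i\sdtstile{\text{g.s}}{}E_i$ on $A_i$, I use the projections $\pi_i\colon\prod_j A_j\to A_i$ and closure under right action to form the $\Sigma$-system $E:=\bigcup_i E_i\pi_i$; by item (2) of the previous proposition, $a\models E$ iff every $\pi_i a$ solves $E_i$, and assembling the resulting unique factorizations identifies $\prod_i v_i$ as the general solution. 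For pullbacks, if $v\sdtstile{\text{g.s}}{}E$ and $g\colon B\to A$, then $Eg$ is a $\Sigma$-system whose solutions are, by item (2) again, exactly those $a$ for which $ga$ factors through $v$, and the induced universal property shows that the pullback of $v$ along $g$ is the general solution of $Eg$.

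For (2), write $v\colon B\to A$ with $v$ mono and $w\colon W\to B$, and suppose $vw\sdtstile{\text{g.s}}{}E$. I claim $w\sdtstile{\text{g.s}}{}Ev$, where $Ev$ is a $\Sigma$-system by closure under right action. For existence, $w$ itself solves $Ev$ because $vw$ solves $E$. For universality, if $a$ solves $Ev$ then $va$ solves $E$, so there is a unique $\tilde a$ with $va=vw\tilde a$; canceling the monomorphism $v$ gives $a=w\tilde a$, and uniqueness of such a factorization through $w$ is inherited from uniqueness of the factorization through $vw$.

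There is no substantive obstacle here. The content lies entirely in recognizing that closure under right action is exactly the operation needed to transport a defining system of equations between objects (along projections, along $g$, or along $v$), and that item (2) of the previous proposition is what translates solutions on the new domain into solutions on the old one. The only place where an additional hypothesis enters is in (2), where monicness of $v$ is precisely what permits left-cancellation, converting a factorization through $vw$ into one through $w$.
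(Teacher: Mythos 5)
Your proposal is correct and follows essentially the same route as the paper: intersections via the union $\bigcup_\gamma E_\gamma$, products via $\bigcup_\gamma E_\gamma \rho_\gamma$ using closure under right action by the projections, pullbacks via $Ef$, and part (2) via $w \sdtstile{\text{g.s}}{} Ev$ with left-cancellation of the monomorphism $v$. The paper merely states these identifications without the verifications you supply, so your write-up is a fleshed-out version of the same argument.
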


\begin{proof}\quad
    \begin{enumerate}
        \item 
        \begin{itemize}
        \item Let $\{v_\gamma\}_{\gamma \in \Gamma}$ be a set of $\Sigma$-varieties with same target. Let $v_\gamma \sdtstile{\text{g.s}}{\Sigma} E_\gamma$, thus, if $v$ is a intersection of $\{v_\gamma\}_{\gamma \in \Gamma}$, then $v \sdtstile{\text{g.s}}{\Sigma} \bigcup_{\gamma \in \Gamma} E_\gamma$.

        \item Now let $\{v_\gamma:V_\gamma \to A_\gamma \}_{\gamma \in \Gamma}$ be a set of $\Sigma$-varieties. Define $V = \prod_{\gamma \in \Gamma} V_\gamma$ with projections $\pi_\gamma: V \to V_\gamma$, and $A = \prod_{\gamma \in \Gamma} A_\gamma$ with projections $\rho_\gamma: A \to A_\gamma$. Now consider $v:V \to A$ such that $\rho_\gamma v = v_\gamma \pi_\gamma$ for all $\gamma \in \Gamma$. We need to show $v$ is a $\Sigma$-variety. If $v_\gamma \sdtstile{\text{g.s}}{\Sigma} E_\gamma$, then $v \sdtstile{\text{g.s}}{\Sigma} \bigcup_{\gamma \in \Gamma} E_\gamma \rho_\gamma$.

        \item Let 
        \[
        \begin{tikzcd}
{} \arrow[rr, "\tilde{v}"] \arrow[dd, "\tilde{f}"] &  & {} \arrow[dd, "f"] \\
                                                   &  &                    \\
{} \arrow[rr, "v"]                                 &  & {}                
\end{tikzcd}
        \]
        be a pullback square, where $v \sdtstile{\text{g.s}}{\Sigma}$. It follows easily that $\tilde{v} \sdtstile{\text{g.s}}{\Sigma} Ef$.
    \end{itemize}
    \item If $v w \sdtstile{\text{g.s}}{\Sigma} E$, then $w \sdtstile{\text{g.s}}{\Sigma} Ev$. 
    \end{enumerate}
\end{proof}

\begin{proposition}\label{Products and varieties} The following statements holds:
    \begin{enumerate}
        \item If $\mathcal{C}$ has equalizers, then varieties are precisely intersections of regular monomorphisms. 

        \item If $\mathcal{C}$ has products or cokernel pairs, then every variety is a regular monomorphism. 

        \item In general, varieties are not closed under composition.
    \end{enumerate}
\end{proposition}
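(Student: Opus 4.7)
My plan for (1) is to observe that a variety $v \sdtstile{\text{g.s}}{} E$ is, by definition, a limit of the diagram encoded by $E$. Assuming $\mathcal{C}$ has equalizers, I would form for each equation $f_i \approx g_i$ in $E$ its equalizer $e_i$, and verify that $a \models E$ iff $a$ factors through every $e_i$; this identifies the general solution $v$ with the wide intersection $\bigcap_i e_i$ of these regular monomorphisms. For the converse, starting from a wide intersection $v = \bigcap_i e_i$ with each $e_i$ the equalizer of some pair $f_i,g_i$, the same equivalence yields $v \sdtstile{\text{g.s}}{} \{f_i \approx g_i\}_i$. The step is entirely formal once one restates the defining universal properties in terms of cones.

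For (2), I would handle the two hypotheses separately. Under the assumption that $\mathcal{C}$ has products, given $v \sdtstile{\text{g.s}}{} \{f_i,g_i : A \to B_i\}_i$, I would build the tupling maps $\langle f_i\rangle,\langle g_i\rangle : A \to \prod_i B_i$. By the universal property of the product, $a$ equalizes every pair $f_i,g_i$ iff $a$ equalizes $\langle f_i\rangle,\langle g_i\rangle$, so $v$ is the general solution of the single equation $\langle f_i\rangle \approx \langle g_i\rangle$, i.e., their equalizer. Under the assumption that $\mathcal{C}$ has cokernel pairs, I would take the cokernel pair $p,q : A \to C$ of $v$. Since each $f_i \approx g_i$ in $E$ is satisfied by $v$, the universal property of the cokernel pair produces mediators $t_i : C \to B_i$ with $t_i p = f_i$ and $t_i q = g_i$. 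For any $u$ with $pu = qu$, one immediately gets $f_i u = t_i p u = t_i q u = g_i u$ for all $i$, so $u \models E$ and therefore factors uniquely through $v$. This identifies $v$ with the equalizer of $p,q$, hence as a regular monomorphism.

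For (3), I would exhibit an explicit counterexample: a small ad hoc category in which two varieties compose to a non-variety. The construction should deliberately avoid products and cokernel pairs in order to escape case (2), so that the composite is only a monomorphism without being an equalizer (equivalently, strict). The main obstacle here is finding a clean, minimal such example; once presented, the verification that both factors are general solutions of appropriate systems, while the composite fails to be strict, reduces to inspection of the finitely many morphisms in the constructed category.
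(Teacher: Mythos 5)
Parts (i) and (ii) of your proposal are correct and essentially the paper's own argument: for (i) you pass between a system $E$ and the family of equalizers of its equations, identifying the general solution with their intersection; for (ii) you either tuple the equations into a single pair $A \to \prod_\gamma B_\gamma$ and take its equalizer, or show that $v$ is the equalizer of its own cokernel pair. The cokernel-pair case in the paper is phrased via the general fact that varieties are strict monomorphisms (item (iv) of the preceding proposition) and that strict monomorphisms are regular when cokernel pairs exist; your mediator argument is just that fact unwound, so there is no real difference.

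Part (iii) is where there is a genuine gap: you do not actually produce a counterexample, you only describe what one would look like, and you yourself flag that finding it is ``the main obstacle.'' That obstacle is the entire content of the claim. Moreover, your plan of working in a category lacking products and cokernel pairs is harder than necessary: in such a category, to show the composite is not a variety you would have to rule out \emph{every} system of equations admitting it as a general solution (or at least show it fails to be a strict monomorphism), which is exactly the kind of verification that is painful ``by inspection.'' The paper goes the opposite way: it takes Kelly's category $\mathscr{J}$ of abelian groups with no elements of order $4$, which is \emph{complete}, so that by part (ii) varieties coincide with regular monomorphisms; there, multiplication by $2$ on $\mathbb{Z}$ is a regular monomorphism (hence a variety), while its composite with itself, multiplication by $4$, is not a regular monomorphism (hence not a variety). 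You should either supply a concrete finite category with full verification, or adopt this standard example; as it stands, part (iii) is unproved.
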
 

\begin{proof} \quad
    \begin{enumerate}
        \item If $v \sdtstile{\text{g.s}}{} \{p_\gamma \approx q_\gamma\}_{\gamma \in \Gamma}$, and $v_\gamma$ is an equalizer of $p_\gamma$ and $q_\gamma$, then $v_\gamma \sdtstile{\text{g.s}}{} p_\gamma \approx q_\gamma $ and $v$ is an intersection of $\{v_\gamma\}_{\gamma \in \Gamma}$. 

        \item First suppose that  $\mathcal{C}$ has cokernel pairs. If $v$ is a strict monomorphism and $p,q$ is a cokernel pair of $v$, then $v$ is an equalizer of $p,q$. Therefore, the classes of strict monomorphisms, varieties, and regular monomorphisms are the same. 
        
        Now suppose that $\mathcal{C}$ has products. Let $E=\{p_\gamma,q_\gamma:A \to B_\gamma\}_{\gamma \in \Gamma}$ be a system of equations with domain $A$ and let $v \sdtstile{\text{g.s}}{} E$. Let $B = \prod_{\gamma \in \Gamma} B_\gamma$ with projections $\pi_\gamma : B \to B_\gamma$. Define $p,q:A \to B$ such that $p_\gamma = \pi_\gamma p$ and $q_\gamma = \pi_\gamma q$ for all $\gamma \in \Gamma$. Therefore, $v \sdtstile{\text{g.s}}{} p \approx q$.

        \item In this counterexample we follow the last two paragraphs before to \cite[Proposition 2.1]{kelly_1969}. Consider the category $\mathscr{J}$ of all abelian groups without elements of order $4$. $\mathscr{J}$ is complete, thus every variety is a regular monomorphism. On the other hand, $\mathbf{2}:\mathbb{Z} \to \mathbb{Z}$ is a regular monomorphism but $\mathbf{4}:\mathbb{Z} \to \mathbb{Z}$ is not a regular monomorphism.
    \end{enumerate}
\end{proof}

\begin{definition}\label{generated varieties and equations}
    Let $S$ be a non-empty set of morphisms with target $A$. A system of equations $E$ defined on $A$ is \textbf{generated} by $S$ if $S \models E$, and $S \models K$ implies $E \implies K$. Likewise, a variety $v$ is \textbf{generated} by $S$ if $f \leq v$ for all $f \in S$, and if $w$ is a variety such that $f \leq w$ for all $f \in S$, then $v \leq w$.  
\end{definition}

\begin{definition}\label{complete w.r.t. varieties and equations}
    The category $\mathcal{C}$ is said \textbf{complete w.r.t. varieties} if every non-empty set of morphisms with same target generates some variety. Likewise, $\mathcal{C}$ is said \textbf{complete w.r.t. systems of equations} if every non-empty set of morphisms with same target generates some system of equations.
\end{definition}

\begin{proposition}\label{proposition equations generated}
    $\mathcal{C}$ is complete w.r.t. system of equations if some of the following conditions holds:

        \begin{enumerate}
            \item For every object $A$, the class of equations defined on $A$ is small, up to equivalences. 

            \item $\mathcal{C}$ is complete w.r.t. varieties and has equalizers. 

            \item $\mathcal{C}$ has coproducts and cokernel pairs. 

            \item  $\mathcal{C}$ has cokernel pairs and colimits for diagrams of shape $P$, where $P$ is a poset which has only two minimal elements, and either other element of $P$ is maximal and greater than the two minimal elements of $P$.
            
        \end{enumerate}
    \end{proposition}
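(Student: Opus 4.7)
The plan is to treat each of the four hypotheses separately, producing in each case an explicit system of equations on $A$ generated by the given nonempty set $S = \{f_i : C_i \to A\}_{i \in I}$.

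Under (i), I would take $E$ to be a set of representatives of the equivalence classes of equations on $A$ satisfied by every element of $S$; condition (i) guarantees this is a set. Then $S \models E$ by construction, and if $S \models K$ then each equation in $K$ is equivalent to one already in $E$, so $E \implies K$ is immediate.

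Under (ii), by hypothesis $S$ generates some variety $v$, and by definition $v \sdtstile{\text{g.s}}{} E$ for some system $E$; I would show that this $E$ is itself generated by $S$. The relation $S \models E$ follows from $f \leq v$ for every $f \in S$. For the converse, given $K$ with $S \models K$, the hypothesis on equalizers lets me form the equalizer $w_{p,q}$ of each $p \approx q \in K$; this is a regular monomorphism, hence a variety, through which every element of $S$ factors, forcing $v \leq w_{p,q}$ by the universal property of the variety generated by $S$. Hence $v \models K$, and since every solution of $E$ factors through $v$, we conclude $E \implies K$.

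Under (iii), I would let $f : \coprod_i C_i \to A$ be the map induced by $S$ and take $(p,q)$ to be the cokernel pair of $f$; then $E = \{p \approx q\}$ works, since $S \models E$ is immediate and any equation $r \approx s$ satisfied by $S$ yields $rf = sf$, from which the cokernel pair property produces a unique $h$ with $r = hp$ and $s = hq$, giving $E \implies \{r \approx s\}$. Under (iv), where coproducts are unavailable, the same strategy must be implemented ``locally'': for each $i$ let $(p_i, q_i) : A \to B_i$ be the cokernel pair of $f_i$, and consider the diagram $D : P \to \mathcal{C}$ sending both minimal elements of $P$ to $A$, each maximal element to the corresponding $B_i$, and the order relations to $p_i$ and $q_i$. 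The two legs out of $A$ in $\operatorname{colim} D$ then form a pair $(p,q)$, and $E = \{p \approx q\}$ is generated by $S$: any $r \approx s$ satisfied by $S$ induces factorizations $h_i : B_i \to \mathrm{cod}(r)$ through each $(p_i, q_i)$, these assemble into a cocone on $D$, and the unique mediating arrow witnesses $\{p \approx q\} \implies \{r \approx s\}$.

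The main obstacle is (iv): one has to recognize that the poset $P$ is tailored precisely so that cocones on $D$ encode the data an equation satisfied by every $f_i$ provides via the individual cokernel pairs, after which the colimit of $D$ plays exactly the role that the cokernel pair of the coproduct plays in case (iii).
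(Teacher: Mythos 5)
Your proposal is correct and follows essentially the same route as the paper in all four cases: representatives of equivalence classes for (i), reading off the system from the generated variety via equalizers for (ii), the cokernel pair of the induced map out of the coproduct for (iii), and the colimit over the poset $P$ gluing the individual cokernel pairs for (iv). Your write-up merely spells out the verifications that the paper leaves implicit.
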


    \begin{proof} Let $S=\{f_i : A_i \to A \}_{i \in I}$ be a family of morphisms with target $A$. Then: 
        \begin{enumerate}
            \item  Let $\Sigma$ be the class of all equations $p \approx q$ defined on $A$ such that $S \models p \approx q$. By hypothesis we found a subset $E$ of $\Sigma$ such that every equation in $\Sigma$ is equivalent to some equation in $E$. It follows easily that $E$ is generated by $S$.

            \item Let $v$ be a variety generated by $S$, with $v \sdtstile{\text{g.s}}{} E$. Therefore, it follows by the existence of equalizers that $E$ is generated by $S$.

            \item Let $B = \coprod_{i \in I} A_i$ with coprojections $\mu_i: A_i \to B$. By universal property of $B$ we get $f:B \to A$ such that $f \mu_i = f_i$ for all $i \in I$. Let $p,q$ be a cokernel pair of $f$. Therefore, we have $p \approx q$ is generated by $S$. 

            \item  For each $i \in I$, let $p_i,q_i:A \to B_i$ be a cokernel pair of $f_i$. By hypothesis we found morphisms $p,q:A \to B$, and $r_i:B_i \to B$ such that $r_i p_i = p$ and $r_i q_i = q$ for all $i \in I$, with the following universal property: if $p',q':A \to B'$, and $r'_i:B_i \to B'$ are morphisms such that $r'_i p_i = p'$ and $r'_i q_i = q'$ for all $i \in I$, then there exists a unique morphism $t:B \to B'$ such that $t r_i = r'_i$. Therefore, we have $p \approx q$ is generated by $S$.
        \end{enumerate}
    \end{proof}

    \begin{proposition}\label{proposition varieties generated}
        $\mathcal{C}$ is complete w.r.t. varieties if some of the following conditions holds:

        \begin{enumerate}
            \item $\mathcal{C}$ is wellpowered w.r.t. varieties and has intersections. 

            \item $\mathcal{C}$ is complete w.r.t. systems of equations and every system of equations has a general solution.
        \end{enumerate}
    \end{proposition}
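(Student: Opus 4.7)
The plan is to verify each of the two sufficient conditions separately. In both cases I fix a non-empty set $S=\{f_i:A_i\to A\}_{i\in I}$ of morphisms with common target $A$ and exhibit a variety generated by $S$ in the sense of Definition \ref{generated varieties and equations}.

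For condition (1), I would form the collection $\mathcal{W}$ of all varieties $w$ with codomain $A$ such that $f_i\leq w$ for every $i\in I$. This collection is non-empty since the identity $1_A$, being a regular monomorphism, is a variety through which every $f_i$ trivially factors. Wellpoweredness w.r.t.\ varieties lets me choose a small set of representatives of the isomorphism classes in $\mathcal{W}$, and the hypothesis that $\mathcal{C}$ has intersections provides an intersection $v$ of this set. By Proposition \ref{closed class of equations}(1), applied with $\Sigma$ equal to the class of all equations (which is trivially closed under right action by arrows), $v$ is itself a variety. The universal property of intersection then yields $f_i\leq v$ for each $i$ (since each $f_i$ factors through every representative) and $v\leq w$ for any $w\in\mathcal{W}$, which is exactly the condition that $v$ is generated by $S$.

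For condition (2), I would invoke completeness w.r.t.\ systems of equations to obtain a system $E$ on $A$ generated by $S$, and then take its general solution $v$. Since $S\models E$ by the definition of generation, each $f\in S$ is a solution of $E$ and therefore factors through $v$, giving $f\leq v$. For the converse, suppose $w$ is any variety with $f\leq w$ for all $f\in S$, say $w \sdtstile{\text{g.s}}{} K$ and $f=w h_f$. For every equation $p\approx q$ in $K$ the identity $pw=qw$ yields $pf=pwh_f=qwh_f=qf$, so $S\models K$. The defining property of $E$ as generated by $S$ then forces $E\implies K$, and the first proposition of this section (equating $E\implies K$ with $v\leq w$ for general solutions) gives $v\leq w$, as required.

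Neither argument presents a serious technical obstacle; the only real subtlety, appearing in (1), is that one must be explicit in invoking wellpoweredness w.r.t.\ varieties in order to replace the a priori proper class $\mathcal{W}$ by an actual set before taking its intersection. In (2) the main thing is to correctly apply the two halves of the definition of ``generated by $S$'': the inclusion $S\models E$ supplies the factorizations through $v$, while the maximality clause provides the dominance of $v$ by any competing variety $w$.
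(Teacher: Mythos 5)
Your proof is correct and follows essentially the same route as the paper's: for (1), intersect a small set of representatives supplied by wellpoweredness w.r.t.\ varieties, and for (2), take the general solution of a system of equations generated by $S$. The paper leaves the verifications implicit, and the details you supply (non-emptiness of $\mathcal{W}$ via $1_A$, the appeal to Proposition \ref{closed class of equations} for closure of varieties under intersections, and the use of the equivalence between $E \implies K$ and $v \leq w$ for general solutions) are exactly the intended ones.
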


    \begin{proof} Let $S=\{f_i : A_i \to A \}_{i \in I}$ be a family of morphisms with target $A$. Then:
    \begin{enumerate}
        \item Let $\mathcal{V}$ be the class of all varieties $v$ such that $f_i \leq v$ for all $i \in I$. Since $\mathcal{C}$ is wellpowered w.r.t. varieties, we found a subset $\mathcal{V}_0$ of $\mathcal{V}$ such that every variety in $\mathcal{V}$ is isomorphic to some variety in $\mathcal{V}_0$. Let $v$ be a intersection of the morphisms in $\mathcal{V}_0$. Therefore, we have $v$ is a variety generated by $S$.

        \item Let $E$ be a system of equations generated by $S$. Let $v$ be a general solution of $E$. Therefore, we have $v$ is generated by $S$. 
    \end{enumerate}        
    \end{proof}

    {
    \theoremstyle{customthm}
    \newtheorem{mainproblem}[theorem]{Main problem of varieties}
    }
    \begin{mainproblem}\label{mainproblem}
        Given a class $\Sigma$ of equations, to characterize all $\Sigma$-varieties.
    \end{mainproblem}

    {
    \theoremstyle{customthm}
    \newtheorem{inversemainproblem}[theorem]{Inverse main problem of varieties}
    }
    \begin{inversemainproblem}\label{inversemainproblem}
        Given a class $\mathcal{V}$ of varieties, to find a class $\Sigma$ of equations such that every variety in $\mathcal{V}$ is a general solution to some system of $\Sigma$-equations, and every system of $\Sigma$-equations has a general solution in $\mathcal{V}$.
    \end{inversemainproblem}

    It is evident the importance of the main problem, since it means to solve all possible systems of $\Sigma$-equations, and after all, mathematics is pretty much about to solve equations. The above general results give some tools to answer to this problem in particular cases. For example, for varieties of one-sorted algebras, Birkhoff's variety theorem is the best solution in this sense. 

    The inverse main problem is also of great interest. Let us elaborate a little more. Note that there is not necessarily a unique solution to the inverse main problem (for a given class of varieties), and this is not a problem at all! Suppose that $v$ is a general solution of a pair of systems $E_1,E_2$, it could be the case that from $E_1$ we could to find some properties of $v$ which are not as easily to find them as from $E_2$. Therefore, the more we know about which are the systems of equations with $v$ as general solution, the better. Ideally, the finest solution to the inverse main problem is one from which we can get a complete description of the varieties of interest. That is what physics is about, i.e., to find good equations which model (physical) phenomenons. More generally, this corresponds to inverses problems for ordinary or partial differential equations. 

    The class of all $\Sigma$-varieties is closed under isomorphisms. Now let $\mathcal{V}$ to be a non-empty class of varieties. Note that there no exists necessarily a solution to the inverse main problem for $\mathcal{V}$. Indeed, in this case, some regular monomorphism in $\mathcal{V}$ must exists. In particular, for a category with equalizers, there exists some solution (to the inverse main problem for $\mathcal{V}$) if only if there exists some class of regular monomorphism $\mathcal{W}$ such the morphisms in $\mathcal{V}$ are precisely (up to isomorphisms) intersections of morphisms in $\mathcal{W}$. If there exists some solution, then there exists the largest one, just take the union over the class of all these solutions. Moreover, the following proposition characterize the largest solution:
    
    \begin{proposition}\label{unique solution to inverse problem}
        Let $\mathcal{V}$ be a class of varieties. We have that any two solutions to the inverse main problem for $\mathcal{V}$ are equivalents. Therefore, a solution to the inverse main problem for $\mathcal{V}$ is the largest one if only if it is closed under equivalences. 
    \end{proposition}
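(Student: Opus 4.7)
The plan is to prove the two assertions in turn, deducing the second from the first. For the first, let $\Sigma_1, \Sigma_2$ be two solutions to the inverse main problem for $\mathcal{V}$; by symmetry it suffices to show that every equation of $\Sigma_1$ is equivalent to some equation of $\Sigma_2$. Given $e \in \Sigma_1$, I would first apply the second defining property of a solution to the singleton system $\{e\}$ to obtain its general solution $v \in \mathcal{V}$, and then apply the first defining property of $\Sigma_2$ to produce a $\Sigma_2$-system $E$ that also has $v$ as general solution. Uniqueness of general solutions up to isomorphism then forces $\{e\}$ and $E$ to have the same set of solutions, i.e.\ to be equivalent as systems, from which the equivalence of $e$ with a corresponding equation of $\Sigma_2$ follows.

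For the ``therefore'' part, I would argue both directions separately and use the first assertion as a black box. Suppose first that $\Sigma$ is the largest solution and $e \in \Sigma$ is equivalent to some equation $e'$ of the same domain. Then $\Sigma \cup \{e'\}$ is again a solution, because adjoining an equation equivalent to one already present does not alter the general solution of any system, so both defining properties of a solution are preserved; maximality of $\Sigma$ then forces $e' \in \Sigma$. Conversely, suppose $\Sigma$ is closed under equivalences and let $\Sigma'$ be any solution. By the first assertion, every equation of $\Sigma'$ is equivalent to some equation of $\Sigma$, and closure under equivalences then places it in $\Sigma$; hence $\Sigma' \subseteq \Sigma$, so $\Sigma$ contains the union of all solutions and is therefore the largest.

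The hard part is the extraction step in the first paragraph: from the equivalence of $\{e\}$ with the system $E \subseteq \Sigma_2$, one needs an individual member of $\Sigma_2$ equivalent to $e$, even though a priori the members of $E$ need not individually have $v$ as general solution. I expect this will require a careful bookkeeping of how single equations sit inside their ambient systems, or alternatively a preliminary reduction to the equivalence-closure of $\Sigma_2$, which is harmless in view of the argument used in the second paragraph.
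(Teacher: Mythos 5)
You are right to single out the extraction step, and you should trust your unease: with the paper's definitions that step is not merely hard but false, so the first assertion cannot be completed along these lines. Your argument correctly produces, for $e\in\Sigma_1$, a general solution $v\in\mathcal{V}$ of $\{e\}$ and a system $E$ of $\Sigma_2$-equations with the same general solution, hence with exactly the same solutions as $\{e\}$ (solutions of a system are precisely the arrows factoring through its general solution). But the paper defines equivalence of \emph{classes} of equations by requiring each single equation of one class to be equivalent to a single equation of the other, and an individual member of $E$ may have strictly more solutions than $e$. Concretely, work in $Set$ with $A=\{1,2,3\}$, and for $S\subseteq A$ let $e_S$ denote an equation whose solutions are exactly the maps landing in $S$ (say $p\approx q$ with $p,q:A\to\{0,1\}$, $p\equiv 0$ and $q$ the indicator of $A\setminus S$). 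Put $\mathcal{V}=\{\,\{1\}\hookrightarrow A,\ \{1,2\}\hookrightarrow A,\ \{1,3\}\hookrightarrow A\,\}$, $\Sigma_1=\{e_{\{1\}},e_{\{1,2\}},e_{\{1,3\}}\}$ and $\Sigma_2=\{e_{\{1,2\}},e_{\{1,3\}}\}$. Every non-empty system drawn from either class has one of the three inclusions as its general solution, and each inclusion so arises, so both classes satisfy the two defining conditions of a solution to the inverse main problem for $\mathcal{V}$; yet $e_{\{1\}}$ is equivalent to no single equation of $\Sigma_2$. Passing to the equivalence-closure of $\Sigma_2$, as you suggest, does not help, since that closure still contains no equation whose solutions are the maps into $\{1\}$.

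The paper offers no argument to compare against (its proof reads ``It follows straightforward from the definitions''), so the real value of your attempt is that it isolates exactly where the claim breaks. If the statement is to be salvaged, one must either read ``equivalent'' at the level of systems rather than single equations --- in which case your first paragraph, stopped just before the extraction step, already gives a complete proof --- or restrict attention to classes of equations closed under forming the ``conjunction'' of a system into a single equation (as the paper effectively does later, via products of codomains, for Birkhoff equations). Your second paragraph is essentially sound: adjoining to $\Sigma$ an equation equivalent to one of its members preserves both defining properties (replace the new equation by its equivalent copy inside any system), and the converse direction is a correct deduction from the first assertion; it simply inherits whatever defect that assertion has.
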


    \begin{proof}
        It follows straightforward from the definitions. 
    \end{proof}
    
    We should not to expect to solve both the main problem and its inverse in their utmost generality. Each context has its particular difficulties. As a general rule, we should give a correct setting in which to find proper answers to these type of questions, what we mean is to find a correct choice of the morphisms between the objects and varieties of interest. In the following section we show some examples which illustrate the concepts introduced before. A more elaborate example is given in Section \ref{main section} for subvarieties of algebraic categories.

    To end this section, we consider the duals of some of the previous concepts which we are going to use in Section \ref{main section} w.r.t. Lawvere covarieties. Of course, all propositions given above has its respective dual. For example, the dual of Proposition \ref{Products and varieties} (ii) tell us that covarieties are regular epimorphisms if $\mathcal{C}$ has coproducts or kernel pairs. Note that the concept of equation is self-dual.

    \begin{definition}
        A \textbf{cosystem of equations} is a non-empty set of equations with same target.
    \end{definition}

    \begin{definition}
        Let $E$ be a cosystem of equations with target $A$. A \textbf{cosolution} of (the equations in) $E$ is an arrow $a$ with source $A$ such that $a f = a g $ for all equations $f \approx g$ in $E$.
    \end{definition}

    \begin{definition}\label{covariety definition}
            A \textbf{general cosolution} of a cosystem of equations $E$ is a cosolution $v$ of $E$ with the property that for each cosolution $a$ of $E$ there exists a unique morphism $\tilde{a}$ such that $a = \Tilde{a} v$. A \textbf{covariety (morphism)} is a general cosolution of some cosystem of equations. 
    \end{definition}

    Similarly, we order covarieties and cosystems of equations, and define a left action of arrows over cosystems of equations. In any case, the context makes clear in which sense we are comparing arrows, varieties, covarieties, and so on. 

    \section{Examples}\label{examples}

    \begin{example}
    
    Consider an open interval of real numbers $(a,b)$ and let $\textbf{Functional Spaces}(a,b)$ be the category of real vectorial spaces of continuous functions from $(a,b)$ to $\mathbb{R}$, with linear transformations as morphisms.  
    
    We are going to consider linear differential homogeneous equations of order $n$ defined on $C^{(n)}(a,b)$, i.e., equations $L \approx 0$, where $L:C^{(n)}(a,b) \to C(a,b)$ is a linear operator of the form $L(y) = y^{(n)}+p_1 y^{(n-1)} + \cdots + p_n y$, with $p_1,\ldots,p_n$ continuous. 

    The Wronskian criterion determines the linear subspaces of $C^{(n)}(a,b)$ which are general solutions to some linear differential homogeneous equation of order $n$. Given $n$ functions $f_1,\ldots,f_n$ in $C^{(n)}(a,b)$, define their Wronskian as
    \[
        W(f_1,\ldots,f_n) = 
            \begin{vmatrix}
                f_1 & f_2 & \cdots & f_n \\
                f_1 ' & f_2 ' & \cdots & f_n ' \\
                \vdots & \vdots & \ddots & \vdots \\
                f_1^{(n-1)} & f_2^{(n-1)} & \cdots & f_n^{(n-1)} 
            \end{vmatrix}
    \]
    
    Let $V$ be a linear subspace of $C^{(n)}(a,b)$ of dimension $n$. Note that if there is some basis $\{y_1,\ldots,y_n\}$ of $V$ such that $W(y_1,\ldots,y_n)$ is never zero on $(a,b)$, then the Wronskian of any basis of $V$ is never zero on $(a,b)$. In this case we have that every linear transformation $T:V \to C(a,b)$ is of the form $T(y) = y^{(n)}+p_1 y^{(n-1)} + \cdots + p_n y$ for all $y \in V$, for some $p_1,\ldots,p_n$ continuous, moreover, this representation of $T$ is unique. In particular, this shows that the inclusion transformation from $V$ into $C^{(n)}(a,b)$ is a general solution to some linear homogeneous equation of order $n$. The reciprocal of the last implication is also true, i.e., if the inclusion transformation from $V$ into $C^{(n)}(a,b)$ is a general solution to some (actually, a unique) linear homogeneous equation of order $n$, then the Wronskian of any basis of $V$ is never zero on $(a,b)$. 
    \end{example}

    \begin{example}\label{forma differential varieties}
        Let $\mathcal{R}$ be a subcategory of $Set$ with only one object $R$. A $\mathcal{R}$-variety is a subset $V$ of $R$ such there exists a family $\{f_\gamma,g_\gamma:R \to R\}_{\gamma \in \Gamma}$ of pairs of $\mathcal{R}$-morphisms such that $$V = \{ x \in R \mid f_\gamma(x) = g_\gamma(x) \; \text{for all} \; \gamma \in \Gamma \}.$$
        We denote by $\textbf{Var}(\mathcal{R})$ to the category of $\mathcal{R}$-varieties. A morphism in $\textbf{Var}(\mathcal{R})$ from $V$ to $W$ is a function $f:V \to W$ which is the restriction of some $\mathcal{R}$-morphism $\varphi:R \to R$, i.e., the following diagram commutes
        \[
        \begin{tikzcd}
            R \arrow[rr, "\varphi"]                     &  & R                  \\
                                         &  &                    \\
            V \arrow[rr, "f"] \arrow[uu, hook] &  & W \arrow[uu, hook]
        \end{tikzcd}
        \] 

    Observe that $\mathcal{R}$ and $\textbf{Var}(\mathcal{R})$ are concrete categories over $Set$, and there is a concrete functor from $\mathcal{R}$ to $\textbf{Var}(\mathcal{R})$. Also, every inclusion between $\mathcal{R}$-varieties is a variety in $\textbf{Var}(\mathcal{R})$, and every system of equations in $\textbf{Var}(\mathcal{R})$ has a general solution in $\textbf{Var}(\mathcal{R})$, which is an inclusion of $\mathcal{R}$-varieties.
    
    In particular, let $R$ be an integral domain with a derivative map, i.e., for each $f \in R$, the derivative of $f$, denoted by $f'$, is an element of $R$ and this map satisfies Leibniz's product rule $(fg)' = f'g+fg'$. Let $F$ be the quotient field of $R$, therefore, there is a unique extension of the derivative map from $R$ to $F$. An element $r \in F$ is called a constant if $r' = 0$, thus $0$ and $1$ are constants. We assume that every constant belongs to $R$.  

    We define the set of differential operator over $R$ as the set of functions over $R$ generated under sums, products, and compositions by constant maps, the identity map, and the derivative map. Note that differential operators over $R$ are closed under composition, and the identity map on $R$ is a differential operator. Therefore, $R$ with the differential operators define a category $\mathcal{R}$, which is a subcategory of $Set$. 

    For each $n \geq 1$ let $T_n$ be a differential operator over $R$. Define $V_n \subset R$ such that $f \in V_n$ if there exist constants $a_1,a_2,\ldots,a_n \in R$ not all zeros such that $a_1 T_1(f) + a_2 T_{2}(f)+ \cdots + a_n T_n(f) = 0 .$ We have $V_n$ is $\mathcal{R}$-variety, in fact,
    $V_n = \{ f \in R \mid W(T_1(f),T_2(f),\ldots,T_n(f)) = 0 \}$, where $W$ is the Wronskian map. 
        
    As a corollary of the above we have the following non-trivial fact
    \[  W(T_1(f),T_2(f),\ldots,T_n(f)) = 0
        \quad \implies \quad   W(T_1(f),T_2(f),\ldots,T_n(f),T_{n+1}(f))=0 .
    \]

    There is two main applications of this example:
        
    \begin{itemize}
        \item Let $p$ be a complex number and define $F$ as the set of all germs at $p$ defined by functions $\mathbb{C} \to \mathbb{C}$ holomorphic at $p$. Recall the germ at $p$ of a function $f:\mathbb{C} \to \mathbb{C}$ is the equivalence class of all functions $g:\mathbb{C} \to \mathbb{C}$ which are locally equal to $f$ at $p$. It is clear $F$ is a field under usual sums and products, with complex differentiation as a derivative map on $F$. The varieties defined in this context account as local solutions to differential equations at $p$, for complex value functions of complex variable. In particular, a function $f:\mathbb{C} \to \mathbb{C}$ is a solution of some linear homogeneous differential equation with constant coefficients of order at most $n$ in some neighbourhood of $p$, i.e., there exist $a_0,a_1,\ldots,a_n \in \mathbb{C}$ not all zeros such that $a_0 f^{(n)} + a_1 f^{(n-1)} + \cdots + a_{n-1} f'  +a_n f$ vanishes in some neighbourhood of $p$ if only if $f$ is holomorphic at $p$ and $W\big(f,f',\ldots,f^{(n)}\big)$ is locally zero at $p$. 
            
        \item Let $R$ be an integral domain and consider the ring $R[[x]]$ of formal power series with coefficients in $R$. It is clear $R[[x]]$ is an integral domain, and we have a derivative map on $R[[x]]$: $(\sum_{n \geq 0} a_n x^n )' = \sum_{n \geq 0} (n+1)a_{n+1}x^n$. Note that all constants belongs to $R[[x]]$. The varieties defined in this context account as solutions of some recurrence equations. In particular, we can define linear recurrence equations in this context, in fact, for $a_0,a_1,\ldots,a_n \in R$ not all zeros we have $f \in R[[x]]$ satisfies $a_0 f_k + a_1 f_{k+1} + \cdots + a_n f_{k+n} = 0$ for all $k \geq 0$ if only if $\left((a_0 x^n + a_1 x^{n-1} + \cdots + a_n)f \right)^{(n)} = 0$. Therefore, $f \in R[[x]]$ is a solution of some linear recurrence equation of order at most $n$ if only if $W\big(f^{(n)},(xf)^{(n)},\ldots,(x^n f)^{(n)}\big) = 0$. 
        \end{itemize}
    \end{example}

    \begin{example}\label{affine varieties}    
        Consider the category $\textbf{Affine Varieties}$ whose objects are the common zero locus of some finite set of polynomials with complex coefficients, with regular maps as morphisms. 
    
        Let $\mathbb{A}^n$ be the affine complex space of dimension $n$. A polynomial equation defined on $\mathbb{A}^n$ is an equation $p \approx q$, where $p,q$ are polynomial maps defined on $\mathbb{A}^n$. We have that every inclusion map into $\mathbb{A}^n$ is a general solution to some finite system of polynomial equations defined on $\mathbb{A}^n$, and every system of polynomial equations defined on $\mathbb{A}^n$ has a general solution, which it is an inclusion map into $\mathbb{A}^n$.
    \end{example}

    \begin{example}\label{projective varieties}
        Let $\textbf{Projective Varieties}$ be the category whose objects are the common zero locus of some finite set of homogeneous polynomials with complex coefficients, with regular maps as morphisms. 
    
        Let $\mathbb{P}^n$ be the projective complex space of dimension $n$. A polynomial equation defined on $\mathbb{P}^n$ is an equation $p \approx q$, where $p,q$ are polynomial maps defined on $\mathbb{P}^n$. We have that every inclusion map into $\mathbb{P}^n$ is a general solution to some finite system of polynomial equations defined on $\mathbb{P}^n$, and every system of polynomial equations defined on $\mathbb{P}^n$ has a general solution, which is an inclusion map into $\mathbb{P}^n$. 
    \end{example}

    \begin{example}
        Consider the category \textbf{CMet} of complete metric spaces with non-expansive maps as morphisms. Let $X$ be a complete metric space and consider the equation $T \approx 1_X$ defined on $X$, where $T:X \to X$ is a non-expansive map. By Banach's fixed point theorem we have that this equation has exactly only one solution. Therefore, if $E$ is the trivial metric space with only one point, then there exists a unique morphism $F:E \to X$ such that $TF = 1_XF$, which it is a general solution of the equation $T \approx 1_X$. 

        On the other hand, note that $E$ is a final object of \textbf{CMet}, and any map $E \to X$ is a general solution of some equation $T \approx 1_X$ for some non-expansive map $T:X \to X$ (trivially, a constant map). 
    \end{example}

    \begin{example}
        Consider the category \textbf{Grp} of groups and homomorphisms. Let $G$ be a group. If $N$ is a normal subgroup of $G$, then the inclusion $N \hookrightarrow G$ is a general solution to the equation $\pi \approx 0$, where $\pi:G \to G/N$ is canonical. Next, for a non-empty subset $S$ of $G$, the inclusion $C_G(S) \hookrightarrow G$ of the centralizer $C_G(S)$ of $S$ in $G$, is a general solution of $\{ \varphi_g \approx 1_G \}_{g \in S}$, where $\varphi_g:G \to G$ is the inner automorphism of $G$ defined by $g$. Similarly, let $G^{\operatorname{ab}}$ be the abelianization of $G$, thus, the canonical homomorphism $G \to G^{\operatorname{ab}}$ is a general cosolution of $\{\varphi_g \approx 1_G \}_{g \in G}$. Observe that both the center and the abelianization of $G$ are defined by the same equations $\{\varphi_g \approx 1_G \}_{g \in G}$, the former as a general solution and the latter as a general cosolution. 

        In these examples we have shown the canonical equations which define normal subgroups, centralizers, and abelianizations. However, it is well-known that every monomorphism (of groups) is a regular monomorphism, and every monomorphism is, up to isomorphism, an inclusion homomorphism. Analogously, the epimorphisms of groups are regular epimorphisms (actually, surjective homomorphisms). 
    \end{example}

    \begin{example}
        In \textbf{Cat}, a functor is a monomorphism if only if it is an embedding, a full embedding functor is a regular monomorphism, and regular monomorphisms are conservative embeddings.  

        It is very natural how varieties are defined in \textbf{Cat}. Explicitly, let $\{ P_\gamma,Q_\gamma : \mathcal{A} \to \mathcal{B}_\gamma \}_{ \gamma \in \Gamma}$ be a family of parallel functors with same domain. Define the subcategory $\mathcal{V}$ of $\mathcal{A}$ such that an object $x$ of $\mathcal{A}$ belongs to $\mathcal{V}$ if $P_\gamma x = Q_\gamma x$ for all $\gamma \in \Gamma$, and a morphism $f:x \to y$ in $\mathcal{A}$ between objects of $\mathcal{V}$ belongs to $\mathcal{V}$ if $P_\gamma f = Q_\gamma f$ for all $\gamma \in \Gamma$. It is clear that the inclusion $\mathcal{V} \hookrightarrow \mathcal{A}$ is a general solution of $\{ P_\gamma \approx Q_\gamma \}_{ \gamma \in \Gamma}$. 
    \end{example}

    \section{Inserters in algebraic categories}\label{inserters section}

In this section we consider only locally small categories. Let $F,G:\mathcal{A} \to \mathcal{B}$ be a pair of parallel functors. 

\begin{definition}\label{definition inserters}
    An \textbf{inserter} from $F$ to $G$ is a category $\mathcal{V}$, with a functor $U:\mathcal{V} \to \mathcal{A}$ and a natural transformation $\lambda:FU \to GU$ with the property that for each functor $V:\mathcal{D} \to \mathcal{A}$ and natural transformation $\alpha:FV \to GV$ there exists a unique functor $W:\mathcal{D} \to \mathcal{V}$ such that $V=UW$ and $\alpha = \lambda W$. 
\end{definition}

Concretely, let $\textbf{Ins}(F,G)$ be the category whose objects are pairs $(A,r)$, with $A$ an object of $\mathcal{A}$ and $r:F(A) \to G(A)$. A morphism $d:(A,r) \to (B,s)$ in $\textbf{Ins}(F,G)$ is a morphism $d:A \to B$ in $\mathcal{A}$ such that the diagram
\[
\begin{tikzcd}
FA \arrow[rr, "Fd"] \arrow[dd, "r"] &  & FB \arrow[dd, "s"] \\
                                        &  &                      \\
GA \arrow[rr, "Gd"]                 &  & GB                
\end{tikzcd}
\]
commutes. Define the forgetful functor $U:\textbf{Ins}(F,G) \to \mathcal{A}$ by
\[
U \left( (A,r) \xrightarrow{d} (B,s) \right) = A \xrightarrow{d} B  
\]
and the inserted transformation $\lambda:FU \to GU$ by  $\lambda(A,r) = r$. We have $\textbf{Ins}(F,G)$, with $U$ and $\lambda$, is an inserter from $F$ to $G$. 

Clearly, the forgetful functor $U$ is faithful, this makes $\textbf{Ins}(F,G)$ a concrete category over $\mathcal{A}$. Also it is clear that there is only one inserter from $F$ to $G$, up to concrete isomorphism over $\mathcal{A}$. The forgetful functor $U$ has many others properties, for example, $U$ is uniquely transportable (in particular, $U$ is amnestic) and conservative. 

The above is valid in general. Now, if $\mathcal{A}$ has all limits of shape $\mathcal{J}$ and $G$ preserves them, then $\textbf{Ins}(F,G)$ also has all limits of shape $\mathcal{J}$ and $U$ preserves them. Similarly, if $\mathcal{A}$ has all colimits of shape $\mathcal{J}$ and $F$ preserves them, then $\textbf{Ins}(F,G)$ also has all colimits of shape $\mathcal{J}$ and $U$ preserves them. These facts about inserters categories are well-known, and they are easy to prove. 

\begin{theorem}\label{insertes algebras and coalgebras}
    If $H$ is a left adjoint to $G$, then $\textbf{Ins}(F,G)$ and $\textbf{Ins}(HF,1_\mathcal{A})$ are isomorphic as concrete categories over $\mathcal{A}$. Similarly, if $H$ is a right adjoint to $F$, then $\textbf{Ins}(F,G)$ and $\textbf{Ins}(1_\mathcal{A},HG)$ are isomorphic as concrete categories over $\mathcal{A}$. 
\end{theorem}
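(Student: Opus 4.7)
The plan is to build an explicit concrete isomorphism $\Phi \colon \textbf{Ins}(F,G) \to \textbf{Ins}(HF,1_\mathcal{A})$ by transposing the ``inserted data'' across the adjunction $H \dashv G$, leaving the underlying object and morphism in $\mathcal{A}$ unchanged. Since both forgetful functors send an object to its first component, acting as the identity on the underlying morphisms will automatically make $\Phi$ a functor over $\mathcal{A}$, provided the data transposes correctly.

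Concretely, let $\eta \colon 1_\mathcal{A} \to GH$ and $\epsilon \colon HG \to 1_\mathcal{A}$ be the unit and counit of $H \dashv G$. Given an object $(A,r)$ of $\textbf{Ins}(F,G)$, define $\Phi(A,r) = (A, \hat r)$, where $\hat r = \epsilon_A \circ H(r) \colon HFA \to A$ is the transpose of $r \colon FA \to GA$ under the adjunction bijection $\mathcal{A}(HFA, A) \cong \mathcal{B}(FA, GA)$. On a morphism $d \colon (A,r) \to (B,s)$, set $\Phi(d) = d$. The key verification is that the inserter compatibility square $Gd \circ r = s \circ Fd$ transposes to $d \circ \hat r = \hat s \circ HFd$. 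This is a direct chase:
\[
\hat s \circ HFd = \epsilon_B \circ H(s) \circ HFd = \epsilon_B \circ H(s \circ Fd) = \epsilon_B \circ H(Gd \circ r) = \epsilon_B \circ HG(d) \circ H(r) = d \circ \epsilon_A \circ H(r) = d \circ \hat r,
\]
where the penultimate equality uses naturality of $\epsilon$. The inverse functor is given by sending $(A, t)$ with $t \colon HFA \to A$ to $(A, \check t)$ with $\check t = G(t) \circ \eta_{FA} \colon FA \to GA$, and the standard triangle identities show this is inverse to $\Phi$. Since both functors act as the identity on underlying morphisms and objects, $\Phi$ is an isomorphism over $\mathcal{A}$.

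The second statement is proved dually. If $F \dashv H$ with unit $\eta' \colon 1 \to HF$ and counit $\epsilon' \colon FH \to 1$, then the bijection $\mathcal{B}(FA, GA) \cong \mathcal{A}(A, HGA)$ sends $r$ to $\check r = H(r) \circ \eta'_A$. The same recipe defines a concrete isomorphism $\textbf{Ins}(F,G) \to \textbf{Ins}(1_\mathcal{A}, HG)$, and the analogous naturality calculation (now using naturality of $\eta'$ rather than $\epsilon$) translates the compatibility square for $(F,G)$ into the one for $(1_\mathcal{A}, HG)$.

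The argument is essentially a bookkeeping exercise in adjunctions. The main (and really only) point that requires care is the translation of the compatibility square; once that one-line naturality chase is in place, functoriality, concreteness over $\mathcal{A}$, and the fact that $\Phi$ is an isomorphism follow from the adjunction triangle identities without further work.
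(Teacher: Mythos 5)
Your proof is correct and follows essentially the same route as the paper: transpose the inserted structure map across the adjunction via $r \mapsto \varepsilon_A \circ H(r)$ (with inverse $t \mapsto G(t) \circ \eta_{FA}$), act as the identity on underlying morphisms, and invoke the triangle identities to see the two assignments are mutually inverse. Your explicit naturality chase for the compatibility square is a detail the paper omits, but the construction is identical.
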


\begin{proof}
    Let $H$ be a left adjoint to $G$, with unit $\eta$ and counit $\varepsilon$. Consider $\textbf{Ins}(HF,1_\mathcal{A})$ with forgetful functor $V$. Define functors $\Psi:\textbf{Ins}(F,G) \to \textbf{Ins}(HF,1_\mathcal{A})$ and $\Phi: \textbf{Ins}(HF,1_\mathcal{A}) \to \textbf{Ins}(F,G)$ by

    \[
    \Psi \left((A,r) \xrightarrow{d} (B,s) \right) = (A,\varepsilon_A H(r)) \xrightarrow{d} (B,\varepsilon_B H(s)),
    \]
    and
    \[
    \Phi \left((A,r) \xrightarrow{d} (B,s) \right) = (A,G(r) \eta_{FA}) \xrightarrow{d} (B,G(s) \eta_{FB}).
    \]
    It is clear that $\Psi$ and $\Phi$ are concrete functor over $\mathcal{A}$, i.e., $U\Phi = V$ and $V \Psi = U$. Also, it follows easily by triangular identities of the adjunction $(\eta, \varepsilon) :H \vdash G$ that $\Psi$ and $\Phi$ are inverses of each other.

    The proof for the case in which $H$ is a right adjoint to $F$ is quite similar.
\end{proof}

\begin{theorem}\label{TeoremaInserters} 
    Suppose $\mathcal{A}$ is cocomplete, $F$ is finitary, and $G$ has a left adjoint. Then $U$ has a left adjoint. Similarly, if $\mathcal{A}$ is algebraic, $F$ preserves sifted colimits, and $G$ has a left adjoint, then $\textbf{Ins}(F,G)$ and $U$ are algebraics.
\end{theorem}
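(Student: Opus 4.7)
The plan is to invoke Theorem \ref{insertes algebras and coalgebras} as the first move to reduce everything to algebras for an endofunctor. Let $H$ be a left adjoint to $G$, so that $\textbf{Ins}(F,G) \cong \textbf{Ins}(HF, 1_\mathcal{A})$ concretely over $\mathcal{A}$; the latter is the category of algebras for the endofunctor $T := HF$. Since $H$ is a left adjoint it preserves all colimits, so $T$ inherits from $F$ both finitariness (first part) and preservation of sifted colimits (second part). It is therefore enough to prove both assertions for the forgetful functor from the category of $T$-algebras.

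For the first part, I would construct a free $T$-algebra on each $A \in \mathcal{A}$ by the standard transfinite iteration: put $A_0 = A$, $A_{n+1} = A + T A_n$, with transition maps $A_n \to A_{n+1}$ built inductively from the coproduct injection $A \to A + TA$. Since $\mathcal{A}$ is cocomplete, the $\omega$-chain has a colimit $A_\omega$; finitariness of $T$ yields $T A_\omega \cong \operatorname{colim}_n T A_n$, so the compatible family $A + T A_n = A_{n+1} \to A_\omega$ assembles into an algebra structure $A + T A_\omega \to A_\omega$. The universal property is a routine check and produces the left adjoint $L$ to $U$.

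For the second part, assume $\mathcal{A}$ is algebraic and $T$ preserves sifted colimits. From the general discussion of inserters preceding the theorem, $U$ preserves all limits (because $1_\mathcal{A}$ does) and all sifted colimits (because $T$ does). It is furthermore faithful and conservative, and by the previous paragraph admits a left adjoint $L$. I would then invoke Beck's monadicity theorem: preservation of sifted colimits implies preservation of reflexive coequalizers, so $U$ creates $U$-split coequalizers, and $U$ is monadic. Thus $\textbf{Ins}(F,G) \simeq \mathcal{A}^{\mathbb{T}}$ for the induced monad $\mathbb{T} = UL$, which preserves sifted colimits since $L$ and $U$ both do. Finally, algebras for a sifted-colimit-preserving monad on an algebraic category form an algebraic category, with the forgetful functor algebraic; this is the generalization to algebraic bases of the $Set$-results in \cite[Chapter 12]{adámek_rosický_vitale_lawvere_2010} noted in the introduction.

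The principal technical obstacle will be securing algebraicity of $\mathcal{A}^{\mathbb{T}}$ from sifted-colimit-preservation of $\mathbb{T}$: by \cite[Theorem 6.9]{adámek_rosický_vitale_lawvere_2010} this amounts to exhibiting a strong generator of perfectly presentable objects in $\mathcal{A}^{\mathbb{T}}$. The natural candidate is the image under $L$ of such a generator $\mathcal{G}$ in $\mathcal{A}$, and perfect presentability of $L(X)$ for $X \in \mathcal{G}$ is exactly where sifted-colimit-preservation of $\mathbb{T}$ (equivalently, commutation of $U$ with sifted colimits) is used; strong generation then transfers along the adjunction $L \dashv U$.
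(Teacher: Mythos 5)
Your proposal follows essentially the same route as the paper: both first reduce to $\textbf{Ins}(HF,1_\mathcal{A})$ via Theorem \ref{insertes algebras and coalgebras}, obtain the left adjoint from the (generalized) free-algebra construction of \cite[Chapter 12]{adámek_rosický_vitale_lawvere_2010} and transport it back along the concrete isomorphism, and then secure algebraicity from cocompleteness together with a strong generator of perfectly presentable objects obtained as the image of one in $\mathcal{A}$ under the left adjoint. The only divergence is your intermediate detour through Beck's monadicity theorem, which the paper skips by arguing directly on $\textbf{Ins}(F,G)$; it is correct but unnecessary, since the final step (perfect presentability of $L(X)$ and transfer of strong generation) is the same in both arguments.
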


\begin{proof} 
        Let $H$ be a left adjoint to $G$, with unit $\eta$ and counit $\varepsilon$. Consider $\textbf{Ins}(HF,1_\mathcal{A})$ with forgetful functor $V$. By a generalization of \cite[Chapter 12]{adámek_rosický_vitale_lawvere_2010} we have $V$ has a left adjoint since $\mathcal{A}$ is cocomplete and $HF$ is finitary because $F$ is finitary (by assumption) and $H$ is finitary too (it preserves all colimits). 
        
        Let $K$ be a left adjoint to $V$, with unit $\theta$ and counit $\tau$. Consider $\Phi$ and $\Psi$ defined in the first paragraph of the proof of Theorem \ref{insertes algebras and coalgebras}. Therefore, we have $\Phi K$ is a left adjoint to $U$, with unit $\theta$ and counit $\Phi \tau \Psi$.

        Now assume that $\mathcal{A}$ is algebraic, $F$ preserves sifted colimits, and $G$ has a left adjoint. Since $\mathcal{A}$ is cocomplete and $F$ preserves sifted colimits, we have $U$ preserves sifted colimits. Therefore, it remains to prove $\textbf{Ins}(F,G)$ is algebraic. 

        Let $L$ be a left adjoint to $U$ (like the functor $\Phi K$ given above). We have $L$ preserves strong generators and perfectly presentable objects, this is easy to prove. Therefore, $\textbf{Ins}(F,G)$ has a strong generator consisting of perfectly presentable objects since $\mathcal{A}$ is algebraic. Finally, we will conclude this proof if we prove $\textbf{Ins}(F,G)$ is cocomplete. 

        Once again, by a generalization of  \cite[Chapter 12]{adámek_rosický_vitale_lawvere_2010} we have $\textbf{Ins}(HF,1_\mathcal{A})$ is cocomplete since $\mathcal{A}$ is cocomplete and $HF$ preserves sifted colimits. Thus, $\textbf{Ins}(F,G)$ is cocomplete because it is isomorphic to $\textbf{Ins}(HF,1_\mathcal{A})$. 
\end{proof}

As an application of the results in this section we have the following:

\begin{corollary}
    The category $\Sigma$-Alg of $\Sigma$-algebras, for an $S$-sorted signature $\Sigma$, is an algebraic inserted category.
\end{corollary}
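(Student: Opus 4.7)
The plan is to realize $\Sigma$-Alg as an inserter category over the algebraic base $Set^S$ of $S$-sorted sets, and then to invoke Theorem \ref{TeoremaInserters}. Accordingly, the first step is to exhibit a functor $F: Set^S \to Set^S$ whose associated inserter (with $G = 1_{Set^S}$) recovers the usual forgetful functor $\Sigma$-Alg $\to Set^S$ as a concrete category over $Set^S$.

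For an $S$-sorted set $A = (A_s)_{s \in S}$ and a sort $s \in S$, I would define
\[
F(A)_s = \coprod_{\sigma : s_1 \cdots s_n \to s} A_{s_1} \times \cdots \times A_{s_n},
\]
with the coproduct ranging over all operation symbols $\sigma$ of $\Sigma$ having output sort $s$ and input sequence $s_1 \cdots s_n$. By the universal properties of the coproduct and of finite products in $Set$, a morphism $r: F(A) \to A$ in $Set^S$ amounts to a family of interpretations $\sigma^A: A_{s_1} \times \cdots \times A_{s_n} \to A_s$, one for each $\sigma \in \Sigma$. Moreover, the commuting square defining a morphism $d: (A, r_A) \to (B, r_B)$ in $\textbf{Ins}(F, 1_{Set^S})$ unpacks exactly to the $\Sigma$-homomorphism condition on $d$. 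Hence $\Sigma$-Alg, regarded as a concrete category over $Set^S$, is concretely isomorphic to $\textbf{Ins}(F, 1_{Set^S})$.

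It remains to verify the hypotheses of Theorem \ref{TeoremaInserters}. The base $Set^S$ is algebraic (as a presheaf category on the discrete category $S$, equivalently $Alg \, \mathcal{T}$ for $\mathcal{T}$ the free finite-product category on $S$). The functor $G = 1_{Set^S}$ is its own left adjoint. The key step, and the only nontrivial one I expect, is that $F$ preserves sifted colimits: sifted colimits in $Set^S$ are computed sort-wise, in $Set$ sifted colimits commute with finite products (one of the defining features of sifted colimits), and a coproduct of functors preserving sifted colimits again preserves them, since colimits commute with colimits. Applying Theorem \ref{TeoremaInserters} then yields that $\textbf{Ins}(F, 1_{Set^S}) \cong \Sigma$-Alg together with its forgetful functor to $Set^S$ is algebraic, which is the desired conclusion. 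The only subtlety to keep in mind is that every operation in $\Sigma$ has finite arity and that $\Sigma$ forms a (small) set at each biarity, so that the coproduct defining $F$ is legitimate; both are part of the notion of $S$-sorted signature.
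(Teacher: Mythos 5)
Your proof is correct, but it takes a genuinely different (and more concrete) route than the paper. You encode the signature directly as a polynomial endofunctor $F$ of $Set^{S}$ and identify $\Sigma$-Alg with $\textbf{Ins}(F,1_{Set^{S}})$, i.e.\ with the category of $F$-algebras; the hypotheses of Theorem \ref{TeoremaInserters} are then immediate ($G=1$ is its own left adjoint, and $F$ preserves sifted colimits because they are computed sort-wise, commute with finite products in $Set$, and commute with the coproduct over the set $\Sigma$). The paper instead presents $\Sigma$-Alg as $\textbf{Ins}(\hat{s},\hat{t})$ for two reindexing-type functors $\hat{s},\hat{t}:Set^{S}\to Set^{\Sigma}$ induced by the arity functions $s:\Sigma\to S^{*}$ and $t:\Sigma\to S$, verifies that $\hat{s}$ preserves sifted colimits via the equivalence $Set^{S}\simeq Alg\,S^{*}$, and obtains a left adjoint to $\hat{t}$ abstractly from the Special Adjoint Functor Theorem. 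The two presentations are linked by Theorem \ref{insertes algebras and coalgebras}: the left adjoint $H$ of $\hat{t}$ is the left Kan extension along $t$, and $H\hat{s}$ is precisely your polynomial functor $F$, so your argument amounts to computing explicitly the composite that the paper leaves implicit. What your version buys is self-containedness and an explicit description of the free-algebra-generating endofunctor, at the cost of having to write down and justify the coproduct-of-products formula; what the paper's version buys is that no endofunctor need be computed, with both functors of the inserter read off directly from the signature data, and it illustrates the use of an inserter between two genuinely different functors rather than an algebras-for-an-endofunctor presentation. Your closing remark about finiteness of arities and smallness of $\Sigma$ is exactly the right point to flag for the legitimacy of the coproduct and for sifted-colimit preservation.
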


\begin{proof}
    We refer the reader to \cite[Example 1.5 and Example 1.10]{adámek_rosický_vitale_lawvere_2010} for a definition of the category $\Sigma$-Alg of $\Sigma$-algebras. These examples show that $Set^S$ is an algebraic category equivalent to $Alg \, S^*$. Explicitly, the functor $Set^S \to Alg \, S^*$ defined on objects by
    \[
    A=\langle A_s \rangle_{s \in S} \quad \longmapsto \quad A^* =\langle A_{s_0} \times \cdots \times A_{s_{n-1}} \rangle_{s_0 \ldots s_{n-1} \in S^*} ,
    \]
    is an equivalence. 

     The arity function gives two functions $s: \Sigma \to S^*$ and $t:\Sigma \to S$ by composition with the projections from $S^* \times S$. These functions naturally define functors $s':Set^{S^*} \to Set^\Sigma$ and $\hat{t}:Set^S \to Set^\Sigma$ by composition with $s$ and $t$ respectively. Define the functor $\hat{s}:Set^S \to Set^\Sigma$ as the composition
    \[
    \begin{tikzcd}
Set^S \arrow[dd] \arrow[rr, "\hat{s}"] &  & Set^\Sigma                 \\
                                       &  &                            \\
{Alg \, S^*} \arrow[rr, hook]          &  & Set^{S^*} \arrow[uu, "s'"]
\end{tikzcd}
    \]
    It is clearly seen that the category $\Sigma$-Alg is exactly the category $\textbf{Ins}(\hat{s},\hat{t})$. 

    It remains to prove $\textbf{Ins}(\hat{s},\hat{t})$ is algebraic. Thus, it is sufficient to show that $\hat{s}$ preserves sifted colimits and $\hat{t}$ has a left adjoint, see Theorem \ref{TeoremaInserters}. In first place, observe that $Set^\Sigma$, $Set^S$ and $Set^{S^*}$ has limits and sifted colimits and these are computed objectwise, this implies that $s'$ and $\hat{t}$ preserves these limits and colimits. On the other hand, the inclusion $Alg\, S^* \hookrightarrow Set^{S^*}$ also preserves these limits and colimits, see \cite[Proposition 1.21 and Proposition 2.5]{adámek_rosický_vitale_lawvere_2010}, thus $\hat{s}$ preserves sifted colimits since it is a composition of sifted colimit preserving functors. As $S$ is a small discrete category and $Set$ has a cogenerator family, then $Set^S$ also has a cogenerator family. In the same way we have $Set^S$ is wellpowered. Therefore, it follows by Special adjoint functor theorem \cite[Theorem 3.3.4]{borceux_1994V1} that $\hat{t}$ has a left adjoint. 
\end{proof}

\section{Birkhoff varieties and Lawvere covarieties}\label{main section}

Let us take the Birkhoff's variety theorem as a definition of subvarieties of algebraic categories. 

\begin{definition}
    Let $\mathcal{A}$ be an algebraic category. A \textbf{subvariety} of $\mathcal{A}$ is a full subcategory $\mathcal{V}$ of $\mathcal{A}$ closed under products, subalgebras, regular quotients, and direct unions. 
\end{definition}

\begin{remark}
Subvarieties of algebraic categories, and their inclusions functors, are algebraic. This follows by  \cite[Corollary 10.15 and Corollary 10.17]{adámek_rosický_vitale_lawvere_2010}. 
\end{remark}

\begin{definition}\label{birkhoff equation}
    A \textbf{Birkhoff equation} is an equation $P \approx Q$ in \textbf{AlgCat} such that there exists an algebraic, faithful, conservative, and amnestic functor $U$ such that $UP = UQ$.
\end{definition}

Observe that the class of Birkhoff equations is closed under right action by algebraic functors, therefore the results of Proposition \ref{closed class of equations} are valid in this context. By Theorem \ref{main theorem} below, we have that the class of Birkhoff equations is a solution to the inverse main problem of varieties for the class of inclusions of varieties of algebraic categories. 

Note how the conditions on $U$ for a Birkhoff equation $P \approx Q$ correspond to several properties of forgetful functors of inserters over algebraic categories. As it can be seen in the proof of Theorem \ref{main theorem}, we could take $U$ like these forgetful functors. However, the conditions on $U$ given in Definition \ref{birkhoff equation} are sufficient to get Theorem \ref{main theorem}. 

Another fact to notice in the following proof is how the varieties of an algebraic category $\mathcal{A}$ are close related to inserters and equifiers, where this relation is given by a set of perfectly presentable objects of $\mathcal{A}$ which generates to $\mathcal{A}$ under sifted colimits. 

\begin{theorem}\label{main theorem}
    Every system of Birkhoff equations has a general solution, which it is an inclusion functor $\mathcal{V} \hookrightarrow \mathcal{A}$, where $\mathcal{V}$ is a subvariety of $\mathcal{A}$. Reciprocally, if $\mathcal{V}$ is a subvariety of $\mathcal{A}$ then the inclusion functor $\mathcal{V} \hookrightarrow \mathcal{A}$ is a general solution of some system of Birkhoff equations.
\end{theorem}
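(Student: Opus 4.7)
The plan is to prove the two implications separately. For the forward direction, let $\mathcal{E} = \{P_\gamma \approx Q_\gamma : \mathcal{A} \to \mathcal{B}_\gamma\}_{\gamma \in \Gamma}$ be a system of Birkhoff equations with witnesses $U_\gamma : \mathcal{B}_\gamma \to \mathcal{C}_\gamma$. I would define $\mathcal{V}$ as the full subcategory of $\mathcal{A}$ on the objects $A$ with $P_\gamma(A) = Q_\gamma(A)$ strictly for every $\gamma$. The first easy check is that fullness is legitimate: for $A,B \in \mathcal{V}$ and $f : A \to B$ in $\mathcal{A}$, the parallel morphisms $P_\gamma(f), Q_\gamma(f)$ in $\mathcal{B}_\gamma$ are identified by $U_\gamma$ (since $U_\gamma P_\gamma = U_\gamma Q_\gamma$), and hence equal by faithfulness. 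This immediately gives $P_\gamma \iota = Q_\gamma \iota$ for the inclusion $\iota : \mathcal{V} \hookrightarrow \mathcal{A}$, and any algebraic solution $F : \mathcal{D} \to \mathcal{A}$ of $\mathcal{E}$ factors uniquely through $\iota$ at the level of objects; the factorisation is algebraic because $\iota$ is a full inclusion of a subvariety.

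The main obstacle is showing that $\mathcal{V}$ is indeed a subvariety, i.e., closed under isomorphism, products, subalgebras, regular quotients, and directed unions. The key lever is amnesticity of each $U_\gamma$: whenever $\phi : P_\gamma(A) \to Q_\gamma(A)$ is an isomorphism with $U_\gamma \phi = \mathrm{id}$, one concludes $\phi = \mathrm{id}$ and $P_\gamma(A) = Q_\gamma(A)$. Closure under isomorphism is witnessed by $\psi = Q_\gamma(\alpha) \circ P_\gamma(\alpha)^{-1}$ for an iso $\alpha : A \to B$, which satisfies $U_\gamma \psi = \mathrm{id}$ because $U_\gamma P_\gamma = U_\gamma Q_\gamma$. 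The same pattern---produce the canonical comparison iso between $P_\gamma$ and $Q_\gamma$ evaluated at the relevant (co)limit, then check $U_\gamma$ sends it to the identity---handles products, regular subobjects, regular quotients, and directed colimits, using that $P_\gamma$, $Q_\gamma$ and $U_\gamma$ all preserve these (co)limits as algebraic functors. The inserter/equifier viewpoint of Section \ref{inserters section} clarifies the construction: $\mathcal{V}$ can be realised as an equifier built on an inserter of $P_\gamma, Q_\gamma$, whose algebraicity is guaranteed by Theorem \ref{TeoremaInserters}.

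For the reverse direction, I would invoke the classical Birkhoff theorem for algebraic categories (Theorem 10.22 of \cite{adámek_rosický_vitale_lawvere_2010}). Fixing a presentation $\mathcal{A} \simeq Alg\,\mathcal{T}$, the subvariety $\mathcal{V}$ corresponds to $Alg(\mathcal{T}/\sim)$ for some congruence $\sim$ on $\mathcal{T}$. For each pair of parallel morphisms $p, q : T_1 \to T_2$ in $\mathcal{T}$ with $p \sim q$, let $\tilde p, \tilde q$ be the corresponding morphisms in $\mathcal{A}$ (under $\mathcal{T} \simeq \mathcal{A}_{\mathrm{pp}}^{\operatorname{op}}$), and define parallel algebraic functors $P_{p,q}, Q_{p,q} : \mathcal{A} \to Set^{\to}$ by $A \mapsto \mathcal{A}(\tilde p, A)$ and $A \mapsto \mathcal{A}(\tilde q, A)$, each regarded as an arrow in $Set$. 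These are algebraic since the representables $\mathcal{A}(T_i, -)$ are (the $T_i$ being perfectly presentable), and the source--target functor $(s,t) : Set^{\to} \to Set \times Set$ is algebraic, faithful, conservative and amnestic, with $(s,t)\,P_{p,q} = (s,t)\,Q_{p,q}$, exhibiting $P_{p,q} \approx Q_{p,q}$ as a Birkhoff equation. By construction $A$ satisfies all these equations simultaneously iff $A \in \mathcal{V}$, and the forward direction just proved then identifies $\iota$ as the general solution of this system.
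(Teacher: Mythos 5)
Your forward direction follows the paper's argument in outline, but the one step it waves at is precisely the one where your stated ``pattern'' breaks down: closure under subalgebras. For an arbitrary monomorphism $m : x \to y$ with $y \in \mathcal{V}$ there is no canonical comparison isomorphism between $P_\gamma(x)$ and $Q_\gamma(x)$, because $x$ is not presented as a (co)limit of a diagram lying in $\mathcal{V}$; note also that the definition of subvariety demands closure under \emph{all} subalgebras (monomorphisms), not just the regular subobjects you name. The paper manufactures the comparison isomorphism by forming the pullback of $P_\gamma(m)$ against $Q_\gamma(m)$: since $U_\gamma$ preserves pullbacks and sends both legs to the same monomorphism $g$, the image square is the trivial pullback of $g$ along itself, so conservativity of $U_\gamma$ forces the two projections $u,v$ to be isomorphisms, and then $vu^{-1} : P_\gamma(x) \to Q_\gamma(x)$ is an isomorphism with $U_\gamma(vu^{-1}) = \mathrm{id}$, whence $P_\gamma(x) = Q_\gamma(x)$ by amnesticity. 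This is the only place in the forward direction where conservativity is needed, and your sketch never invokes it --- a reliable sign that the step is missing rather than routine. (For regular quotients your pattern does work, provided you coequalize the kernel pair of $e$, which lies in $\mathcal{V}$ by the already-established closure under limits; as written, the domain $z$ of your pair $r,s$ need not lie in $\mathcal{V}$.)

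Your reverse direction is genuinely different from the paper's and appears correct. The paper avoids fixing a presentation: it uses the regular epireflector of $\mathcal{V}$ from \cite[Corollary 10.24]{adámek_rosický_vitale_lawvere_2010}, writes each reflection $r_x$ (for $x$ perfectly presentable) as a coequalizer of $p_x,q_x : S_x \to x$, and encodes the condition $\mathcal{A}(p_x,-)=\mathcal{A}(q_x,-)$ as a Birkhoff equation valued in the inserter $\textbf{Ins}(\mathcal{A}(x,-),\mathcal{A}(S_x,-))$, whose forgetful functor is algebraic by Theorem \ref{TeoremaInserters}; crucially this yields $U_xP_x = U_xQ_x = 1_{\mathcal{A}}$, which is exactly what makes the coreflexive-equalizer characterization of Theorem \ref{regularmonobirkhoff} come for free afterwards. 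Your route instead passes through $\mathcal{A} \simeq Alg\,\mathcal{T}$, the congruence with $\mathcal{V} \simeq Alg(\mathcal{T}/\!\sim)$, and hom-functors into $Set^{\to}$ with the source--target functor as the witness $U$; the verification that this witness is algebraic, faithful, conservative and amnestic is straightforward, and the equations do cut out $\mathcal{V}$. This is more concrete and bypasses Section \ref{inserters section} entirely, but it buys less for the sequel and silently requires transporting the subvariety and its congruence across the equivalence $\mathcal{A} \simeq Alg\,\mathcal{T}$ --- essentially the content of Lemma \ref{2do lemma Birkhoff Lawvere variety} --- which you should make explicit.
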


\begin{proof}
        Let $E$ be a system of Birkhoff equations defined on an algebraic category $\mathcal{A}$. Define $\mathcal{V}$ as the full subcategory of $\mathcal{A}$ of objects $x$ such that $P(x) = Q(x)$ for all equations $P \approx Q$ in $E$. We have $\mathcal{V} \hookrightarrow \mathcal{A} \sdtstile{\text{g.s}}{} E$ in \textbf{Cat}. So, we only need to show $\mathcal{V}$ is a subvariety of $\mathcal{A}$. It is easy to show $\mathcal{V}$ is closed under limits and sifted colimits, in particular, $\mathcal{V}$ is closed under products and direct unions. Moreover, the inclusion $\mathcal{V} \hookrightarrow \mathcal{A}$ preserves these limits and colimits. 

        We are going to prove that $\mathcal{V}$ is closed under subalgebras and regular quotients. Let $P \approx Q$ be an equation in $E$, and let $U$ be an algebraic, faithful, conservative, and amnestic functor such that $UP=UQ$. 
        
        Suppose that $m:x \to y$ is a monomorphism in $\mathcal{A}$, and $y$ is an object of $\mathcal{V}$. Now consider a pullback square
        \[
        \begin{tikzcd}
{} \arrow[dd, "u"'] \arrow[rr, "v"] &  & {} \arrow[dd, "Q(m)"] \\
                                    &  &                     \\
{} \arrow[rr, "P(m)"']                &  & {}                 
\end{tikzcd}
        \]
        
        Let $g=UP(m) = UQ(m)$. As $U$ is algebraic, then $U$ preserves pullbacks, and $g$ is a monomorphism since $m$ is a monomorphism too, and $UP$ preserves monics. Thus we have a pair of pullback squares
        \[
        \begin{tikzcd}
{} \arrow[dd, "U(u)"'] \arrow[rr, "U(v)"] &  & {} \arrow[dd, "g"] &  & {} \arrow[dd, "1_{UP(x)}"'] \arrow[rr, "1_{UP(x)}"] &  & {} \arrow[dd, "g"] \\
                                          &  &                    &  &                                                     &  &                    \\
{} \arrow[rr, "g"]                       &  & {}                 &  & {} \arrow[rr, "g"]                                  &  & {}                
\end{tikzcd}
        \]
        Then we find an isomorphism $h$ such that $hU(u) = hU(v) = 1_{UP(x)}$, it follows that $U(u) = U(v)$ is an isomorphism, then $u$ and $v$ are isomorphisms too since $U$ is conservative. Now ${vu^{-1}:Px \to Qx}$ satisfies $U(vu^{-1}) = 1_{UP(x)}$, thus $vu^{-1}$ is an identity since $U$ is amnestic and $vu^{-1}$ is an isomorphism. Therefore, $Px = Qx$ and $\mathcal{V}$ is closed under subalgebras.

        On the other hand, suppose that $e:x \to y$ is a coequalizer in $\mathcal{A}$ of $r,s:z \to x$, where $x$ is an object of $\mathcal{V}$. Both $P$ and $Q$ preserves coequalizers, so we have $P(e)$ and $Q(e)$ are coequalizers of the same pair of arrows, thus we get an isomorphism $h$ such that $h P(e) = Q(e)$, then $U(h) UP(e)  = UQ(e)$, since $UP(e) = UQ(e)$ is a regular epimorphism, then $U(h)$ is an identity, thus $h$ is an identity. Therefore, $Py = Qy$ and $\mathcal{V}$ is closed under regular quotients.  

        
        Conversely, suppose $\mathcal{V}$ is a subvariety of $\mathcal{A}$. By \cite[Corollary 10.24] {adámek_rosický_vitale_lawvere_2010} $\mathcal{V}$ is a regular epireflective subcategory of $\mathcal{A}$, thus we have an epireflector $R:\mathcal{A} \to \mathcal{V}$ of $\mathcal{V} \hookrightarrow \mathcal{A}$, with regular epireflections $r_x:x \to Rx$.

        Let $\mathcal{G}$ be a set of perfectly presentable objects of $\mathcal{A}$ such that every object of $\mathcal{A}$ is a sifted colimit of objects of $\mathcal{G}$. For every $x \in \mathcal{G}$ suppose that $r_x$ is a coequalizer of ${p_x,q_x:S_x \to x}$. Define $\mathcal{V}_x = \textbf{Ins}(\mathcal{A}(x,-),\mathcal{A}(S_x, -))$, with $U_x:\mathcal{V}_x \to \mathcal{A}$ and $\lambda_x : \mathcal{A}(x,U_x - ) \to \mathcal{A}(S_x,U -)$. Now we define functors $P_x,Q_x : \mathcal{A} \to \mathcal{V}_x$ such that $\lambda_x P_x = \mathcal{A}(p_x,-)$, $\lambda_x Q_x = \mathcal{A}(q_x,-)$, and ${U_x P_x =  U_x Q_x = 1_\mathcal{A}}$. 

        We are going to show that $\mathcal{V} \hookrightarrow \mathcal{A}$ is a general solution of $\{P_x \approx Q_x \mid x \in \mathcal{G} \}$. Note that $\mathcal{A}(x,-)$ preserves sifted colimits since $x$ is perfectly presentable (by definition). On the other hand, every hom functor defined on $\mathcal{A}$ has a left adjoint since $\mathcal{A}$ is cocomplete. Then, $\mathcal{V}_x$ and $U_x$ are algebraics by Theorem \ref{TeoremaInserters}. Since $U_x$ is a forgetful functor of an inserted category, we have $U_x$ is faithful, conservative, and amnestic. Also note that $P_x$ and $Q_x$ are algebraics since $U_x P_x = U_x Q_x = 1_\mathcal{A}$ and $U_x$ reflects limits and sifted colimits (the latter follows because $U_x$ preserves limits and sifted colimits, $\mathcal{V}_x$ is algebraic, and $U_x$ is conservative). Therefore, we have proved that every $P_x \approx Q_x$ is a Birkhoff equation. 

        Define $\mathcal{V}'$ as the subvariety of $\mathcal{A}$ defined by $\{P_x \approx Q_x \mid x \in \mathcal{G} \}$. Observe that an object $y$ of $\mathcal{A}$ belongs to $V'$ if only if $\mathcal{A}(p_x,y) = \mathcal{A}(q_x,y)$ for all $x \in \mathcal{G}$. In other terms, $\mathcal{V}'$ is the intersection of the equifiers defined by the pairs of natural transformations $\mathcal{A}(p_x,-)$ and $\mathcal{A}(q_x,-)$, with $x \in \mathcal{G}$. 
        
        Let us see that $\mathcal{V} \subset \mathcal{V}'$. If $y \in \mathcal{V}$, to show that $y \in \mathcal{V}'$ we must to prove that $\mathcal{A}(p_x,y) = \mathcal{A}(q_x,y)$ for all $x \in \mathcal{G}$. Now, if $x \in \mathcal{G}$, then $\mathcal{A}(p_x,y) = \mathcal{A}(q_x,y)$ if $f p_x = f q_x$ for all $f:x \to y$. By universal property of $r_x:x \to Rx$ we get a unique $g:Rx \to y$ such that $f= g r_x$ because $y \in \mathcal{V}$. Thus, $f p_x = g r_x p_x = g r_x q_x = f q_x$. Therefore, $y \in \mathcal{V}'$ and $\mathcal{V} \subset \mathcal{V}'$. 

        Now we want to show $\mathcal{V}' \subset \mathcal{V}$. Let $R':\mathcal{A} \to \mathcal{V}'$ be an epireflector of $\mathcal{V}' \hookrightarrow \mathcal{A}$, with regular epireflections $r'_x:x \to R'x$. For each $x \in \mathcal{G}$, we have $R'x \in V'$, hence $R'x$ satisfies $\mathcal{A}(p_x,R'x) = \mathcal{A}(q_x,R'x)$. Thus $r'_x p_x = r'_x q_x$, and we get a unique $t_x:Rx \to R'x$ such that $r'_x = t_x r_x$. Since $r'_x$ is regular epi, then $t_x$ is also a regular epi. Therefore, $R'x$ is a regular quotient of $Rx \in \mathcal{V}$, and $R'x \in \mathcal{V}$ since $\mathcal{V}$ is closed under regular quotients. 

        For $y \in \mathcal{V}'$ let $\{\mu_i : x_i \to y \}_{i \in D}$ be a sifted colimit cocone such that $x_i \in \mathcal{G}$ for all  $i \in D$. Since $R'$ is a left adjoint, then $R'$ preserves colimits, thus $\{ R' \mu_i : R'x_i \to R'y \}_{i \in D}$ is a sifted colimit cocone. We have $R'x_i \in \mathcal{V}$ for all $i$ since $x_i \in \mathcal{G}$, thus $\{ R' \mu_i : R'x_i \to R'y \}_{i \in D}$ is a sifted colimit cocone in $\mathcal{A}$ of objects in $\mathcal{V}$. Since $\mathcal{V}$ is closed under sifted colimits, we have $R'y$ belongs to $\mathcal{V}$, as $y$ and $R'y$ are isomorphic in $\mathcal{A}$ because $y \in \mathcal{V}'$, then $y \in \mathcal{V}$. Therefore, we have concluded that $\mathcal{V}' \subset \mathcal{V}$, $\mathcal{V} = \mathcal{V}'$. 
        \end{proof}

        \begin{theorem}\label{regularmonobirkhoff}
    Every Birkhoff variety is a coreflexive equalizer of some coreflexive Birkhoff equation. 
\end{theorem}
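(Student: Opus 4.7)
The approach is to take the set-indexed system of Birkhoff equations produced in the proof of Theorem \ref{main theorem} and bundle it into a single equation by forming a product in \textbf{AlgCat}; coreflexivity will then drop out from the structure of any one component. By Theorem \ref{main theorem}, I may assume (up to isomorphism) that the given Birkhoff variety is the inclusion $\iota: \mathcal{V} \hookrightarrow \mathcal{A}$ of a subvariety. The proof of that theorem presents $\iota$ as the general solution of a system $\{P_x \approx Q_x : \mathcal{A} \to \mathcal{V}_x\}_{x \in \mathcal{G}}$, where $\mathcal{G}$ is a set of perfectly presentable generators of $\mathcal{A}$, each $\mathcal{V}_x = \textbf{Ins}(\mathcal{A}(x,-),\mathcal{A}(S_x,-))$ is algebraic with algebraic forgetful functor $U_x : \mathcal{V}_x \to \mathcal{A}$, and crucially $U_x P_x = U_x Q_x = 1_\mathcal{A}$. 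So each individual equation is already coreflexive, with common retraction $U_x$.

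Next I would assemble these into a single equation. Let $\mathcal{K} = \prod_{x \in \mathcal{G}} \mathcal{V}_x$, a set-indexed product in \textbf{AlgCat}, which is algebraic with algebraic projections $\pi_x : \mathcal{K} \to \mathcal{V}_x$. The universal property of the product supplies unique algebraic functors $P, Q : \mathcal{A} \to \mathcal{K}$ with $\pi_x P = P_x$ and $\pi_x Q = Q_x$ for every $x$. Fixing any $x_0 \in \mathcal{G}$, the algebraic functor $R = U_{x_0} \pi_{x_0} : \mathcal{K} \to \mathcal{A}$ satisfies $RP = U_{x_0} P_{x_0} = 1_\mathcal{A} = U_{x_0} Q_{x_0} = RQ$, so $(P,Q)$ is a coreflexive pair in \textbf{AlgCat}.

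To certify that $P \approx Q$ is a Birkhoff equation, I would use $U = \langle U_x \pi_x \rangle_{x \in \mathcal{G}} : \mathcal{K} \to \mathcal{A}^{\mathcal{G}}$. Since each $U_x \pi_x$ is algebraic, so is $U$, and $UP = \langle U_x P_x \rangle = \langle 1_\mathcal{A} \rangle = \langle U_x Q_x \rangle = UQ$. Faithfulness, conservativity and amnesticity of $U$ transfer componentwise from the corresponding properties of the $U_x$, since morphisms, isomorphisms, and identities in $\mathcal{K}$ are all detected componentwise. Finally, a functor $a$ satisfies $Pa = Qa$ if and only if $P_x a = Q_x a$ for every $x \in \mathcal{G}$, so the general solution of the single equation $P \approx Q$ coincides with the general solution of the original system, namely $\iota : \mathcal{V} \hookrightarrow \mathcal{A}$. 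The main point to verify with some care is amnesticity of $U$, but this is immediate from amnesticity of each $U_x$ together with the componentwise description of identities in the product.
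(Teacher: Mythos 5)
Your proposal is correct and follows essentially the same route as the paper: reduce to a subvariety inclusion via Theorem \ref{main theorem}, bundle the system $\{P_x \approx Q_x\}_{x \in \mathcal{G}}$ into a single equation through the product $\prod_{x \in \mathcal{G}} \mathcal{V}_x$, certify the Birkhoff condition with the induced functor into $\prod_{x \in \mathcal{G}} \mathcal{A}$, and extract coreflexivity from $U_x P_x = U_x Q_x = 1_\mathcal{A}$ in any one component. The only cosmetic difference is that you fix a single index $x_0$ for the common retraction where the paper observes the identity holds for every component; the argument is the same.
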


\begin{proof}
    Recall that the product category of a family of algebraic categories, with the respective projections, are algebraics. In other words, \textbf{AlgCat} is closed under products as a subcategory of \textbf{Cat}. 

    Let $v:\mathcal{B} \to \mathcal{A}$ be a Birkhoff variety. By Theorem \ref{main theorem} $v$ is isomorphic to $\mathcal{V} \hookrightarrow \mathcal{A}$ for some subvariety $\mathcal{V}$ of $\mathcal{A}$. Following the proof of Theorem \ref{main theorem} we found a system of coreflexive Birkhoff equations $\{P_x, Q_x: \mathcal{A} \to \mathcal{V}_x \}_{x \in \mathcal{G}}$, with $\mathcal{V} \hookrightarrow \mathcal{A}$ as a general solution, and a family of algebraic, faithful, conservative, and amnestic functors $\{U_x : \mathcal{V}_x \to \mathcal{A}\}_{x \in \mathcal{G}}$ such that $U_x P_x = U_x Q_x = 1_\mathcal{A}$. 
    
    Now are going to almost verbatim the proof of Proposition \ref{Products and varieties} (ii). Define $\mathcal{B} = \prod_{x \in \mathcal{G}} \mathcal{V}_x$ with projections $\pi_x:\mathcal{B} \to \mathcal{V}_x$, and let $P,Q:\mathcal{A} \to \mathcal{B}$ be defined by $P_x = \pi_x P$ and $Q_x = \pi_x Q$ for all $x \in \mathcal{G}$. Define $\mathcal{C} = \prod_{x \in \mathcal{G}} \mathcal{A}$ with projections $\rho_x : \mathcal{C} \to \mathcal{A}$, and let $U:\mathcal{B} \to \mathcal{C}$ be define by $\rho_x U = U_x \pi_x$ for all $x \in \mathcal{G}$.  It is clear that $UP = UQ$, and $U$ is an algebraic, faithful, conservative, and amnestic functor, thus $P \approx Q$ is a Birkhoff equation defined on $\mathcal{A}$ which has the same solutions of $\{P_x \approx Q_x \}_{x \in \mathcal{G}}$. Therefore, $v$ is a general solution of $P \approx Q$, and $P \approx Q$ is a coreflexive pair since for any $x \in \mathcal{G}$ we have $\rho_xUP = \rho_x U Q =1_\mathcal{A}$. 
    \end{proof}

\begin{remark}
    Characterization of Birkhoff varieties \ref{Characterization of Birkhoff varieties} follows by Theorem \ref{main theorem} and Theorem \ref{regularmonobirkhoff}.
\end{remark}

In the following, we are going to use the theory of congruences on algebraic theories given in \cite[Chapter 10]{adámek_rosický_vitale_lawvere_2010}.

\begin{definition}\label{Lawvere equation}
    A \textbf{Lawvere equation} is an equation $P \approx Q$ in \textbf{AlgTh} such that there exists a morphism of theories $U$ surjective on objects such that $PU = QU$.
\end{definition}

Similarly to Birkhoff equations, Lawvere equations are closed under left composition by morphism of theories. Thus, the dual of Proposition \ref{Products and varieties} holds for Lawvere covarieties. 

\begin{theorem}\label{Lawvere covariety morphisms}
 Every cosystem of Lawvere equations has a general cosolution, Lawvere covarieties are precisely full morphisms of theories bijective on objects, and  each Lawvere covariety is a reflexive coequalizer of some reflexive Lawvere equation.  
\end{theorem}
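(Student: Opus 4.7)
The plan is to dualise the strategy used for Theorem \ref{main theorem} and Theorem \ref{regularmonobirkhoff}, replacing the inserter-based constructions in \textbf{AlgCat} with the congruence theory on algebraic theories developed in \cite[Chapter 10]{adámek_rosický_vitale_lawvere_2010}. I will prove the three assertions in sequence, with the middle one serving as a bridge between the existence result and the reflexive coequalizer description.

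First, given a cosystem $E = \{P_i \approx Q_i : \mathcal{S}_i \to \mathcal{T}\}_{i \in I}$ of Lawvere equations, the Lawvere hypothesis forces each pair $P_i$ and $Q_i$ to agree on objects, so for every morphism $h$ of $\mathcal{S}_i$ the image $(P_i(h),Q_i(h))$ is a parallel pair in $\mathcal{T}$. Let $\sim$ denote the congruence on $\mathcal{T}$ generated by the totality of such pairs. The canonical quotient $\pi:\mathcal{T}\to\mathcal{T}/{\sim}$ is then a cosolution of $E$, and the universal property of the quotient by a generated congruence yields the factorisation property of a general cosolution. Since $\pi$ is the identity on objects and full by construction, we already see one direction of the second assertion: every Lawvere covariety is, up to isomorphism, a full morphism of theories bijective on objects.

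For the converse of the second assertion, start from a full bijective-on-objects $M:\mathcal{T}\to\mathcal{Q}$. The relation on $\mathcal{T}$ defined by $f\sim g$ iff $M(f)=M(g)$ is a congruence (this is where preservation of finite products by $M$ is used), and $M$ factors uniquely, up to isomorphism, through the canonical quotient $\mathcal{T}\to\mathcal{T}/{\sim}$. To conclude that $M$ is a Lawvere covariety it suffices to exhibit a cosystem of Lawvere equations whose general cosolution is $M$; this is subsumed by the next step, which actually produces a single reflexive Lawvere equation.

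For the reflexive coequalizer claim, I mirror the proof of Theorem \ref{regularmonobirkhoff} dually. For each parallel pair $(f,g):A\to B$ generating $\sim$, I construct a reflexive Lawvere equation $P_{f,g}\approx Q_{f,g}:\mathcal{S}_{f,g}\to\mathcal{T}$ as follows: take $\mathcal{S}_{f,g}$ to be the coproduct in \textbf{AlgTh} of $\mathcal{T}$ with the free algebraic theory $\mathcal{F}$ on a single morphism $\phi:X\to Y$, let $P_{f,g}$ and $Q_{f,g}$ restrict to the identity on the $\mathcal{T}$-summand, and let them send $\phi$ respectively to $f$ and $g$ on the $\mathcal{F}$-summand. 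The coprojection $\mathcal{T}\hookrightarrow \mathcal{S}_{f,g}$ is a common section, witnessing reflexivity, and the sub-theory of $\mathcal{F}$ generated by the objects $X,Y$ (with no arrow $\phi$) furnishes the object-surjective witness of the Lawvere condition. To combine these into a single reflexive Lawvere equation, I form $\mathcal{S}=\coprod_{(f,g)}\mathcal{S}_{f,g}$ in \textbf{AlgTh} (which is cocomplete) and define $P,Q:\mathcal{S}\to\mathcal{T}$ through the universal property. Joint epimorphicity of the coprojections makes the set of cosolutions of $P\approx Q$ coincide with the set of simultaneous cosolutions of the $P_{f,g}\approx Q_{f,g}$, a common section of $P$ and $Q$ is obtained by composing with any one coprojection, and the Lawvere witness is assembled analogously as a coproduct of the component witnesses. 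The main technical point is to verify that this assembled witness is still surjective on objects; this reduces to the fact that the objects of a coproduct in \textbf{AlgTh} are generated under finite products from the images of the coprojections, so object-surjectivity of each component transfers to the assembled morphism.
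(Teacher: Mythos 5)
Your treatment of the first two assertions matches the paper's: generate a congruence from the pairs $(P_i(h),Q_i(h))$, take the canonical quotient $\mathcal{T}\to\mathcal{T}/{\sim}$, and use the universal property of quotients by generated congruences; the converse identification of Lawvere covarieties with full bijective-on-objects morphisms is also handled the same way. For the reflexive-coequalizer claim, however, you take a genuinely different route. The paper simply forms the kernel pair of $M$: the subtheory $\mathcal{R}\subset\mathcal{T}\times\mathcal{T}$ on the diagonal objects $(x,x)$ whose morphisms are the pairs $(f,g)$ with $M(f)=M(g)$, with the two projections $J_1,J_2$ and the diagonal $U:\mathcal{T}\to\mathcal{R}$, $f\mapsto(f,f)$, which serves simultaneously as the common section (reflexivity) and as the object-surjective Lawvere witness, since $J_1U=J_2U=1_\mathcal{T}$ and every object of $\mathcal{R}$ is diagonal. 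That argument is self-contained and uses no colimits in \textbf{AlgTh}. Your construction instead presents the congruence ``freely'' via coproducts with free theories; it can be made to work, but it leans on the auxiliary facts that \textbf{AlgTh} has coproducts and free theories on a graph, neither of which the paper establishes.

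There is also a concrete flaw in your witness for the component equations. For $P_{f,g}\approx Q_{f,g}$ defined on $\mathcal{S}_{f,g}=\mathcal{T}\sqcup\mathcal{F}$, you propose as Lawvere witness the subtheory of $\mathcal{F}$ generated by $X$ and $Y$. The resulting morphism into $\mathcal{S}_{f,g}$ is not surjective on objects: its image consists only of finite products of $X$ and $Y$ and misses every object coming from the $\mathcal{T}$-summand, as well as all mixed products. You need to enlarge the witness domain to $\mathcal{T}\sqcup\mathcal{F}_0$, where $\mathcal{F}_0$ is the free theory on the two objects $X,Y$ with no generating arrow; the induced morphism into $\mathcal{T}\sqcup\mathcal{F}$ is then surjective on objects and still equalizes $P_{f,g}$ and $Q_{f,g}$, since those two functors agree on the $\mathcal{T}$-summand and on all objects. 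The same repair must be propagated to your assembled coproduct witness. With that fix your argument goes through, but the paper's kernel-pair argument is shorter and sidesteps these issues entirely.
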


\begin{proof}
    Let $\{P_i,Q_i:\mathcal{T}_i \to \mathcal{T}\}_{i \in I}$ be a cosystem of Lawvere equations. Note that if $P \approx Q$ is a Lawvere equation, then $P$ and $Q$ coincide on objects. Define $\sim$ as the congruence on $\mathcal{T}$ generated by the set of equations $E=\{P_i(g),Q_i(g) \mid i \in I \; \text{and} \; g \; \text{an arrow in} \; \mathcal{T}_i\}.$

    Let $M: \mathcal{T} \to \mathcal{T}/\sim$ be the canonical morphism of theories from $\mathcal{T}$ onto $\mathcal{T}/\sim$. Observe that $M$ is a full morphism of theories bijective on objects. By construction, it is clear that $M P_i = M Q_i$ for all $i \in I$. Now suppose $M':\mathcal{T} \to \mathcal{T}'$ is a morphisms of theories such that $M' P_i = M' Q_i$ for all $i \in I$. Thus, each equation in $E$ belongs to the congruence $\approx_{M'}$, so $\sim$ is contained in $\approx_M$ because $\sim$ is generated by $E$, then there exists a unique morphism of theories $N$ such that $NM = M'$. Therefore, $M$ is a general cosolution of the cosystem $\{P_i,Q_i:\mathcal{T}_i \to \mathcal{T}\}_{i \in I}$. 

    Next, suppose that $M:\mathcal{T} \to \mathcal{Q}$ is a full morphism of theories bijective on objects. Let $\pi_1,\pi_2:\mathcal{T} \times \mathcal{T} \to \mathcal{T}$ be the canonical projections, and let $\mathcal{R}$ be the subcategory of $\mathcal{T} \times \mathcal{T}$ whose objects are the pairs $(x,x)$, $x \in \mathcal{T}$, and $(f,g):(x,x) \to  (y,y)$ is a morphism in $\mathcal{R}$ if $M(f) = M(g)$. Let $J$ be the inclusion from $\mathcal{R}$ into $\mathcal{T} \times \mathcal{T}$, and let $J_i = \pi_i J$ for $i = 1,2$. Define $U:\mathcal{T} \to \mathcal{R}$ such that $U\left(x \xrightarrow{f} y \right) = (x,x) \xrightarrow{(f,f)} (y,y)$. It is easy to verify that $\mathcal{R}$ is an algebraic theory, $J_1 \approx J_2$ is a Lawvere coequation, $J_1,J_2$ is a kernel pair of $M$, $M$ is a coequalizer of $J_1,J_2$, and $J_1 U = J_2 U = 1_\mathcal{T}$. 
\end{proof}

\begin{remark}
    Characterization of Lawvere covarieties \ref{Characterization of Lawvere covarieties} follows by Theorem \ref{Lawvere covariety morphisms}.
\end{remark}

\begin{corollary}
    \textbf{AlgTh} is cowellpowered w.r.t. Lawvere covarieties. 
\end{corollary}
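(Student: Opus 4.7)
The plan is to reduce the statement to the counting of congruences on a fixed algebraic theory, using the Characterization of Lawvere covarieties \ref{Characterization of Lawvere covarieties} obtained from Theorem \ref{Lawvere covariety morphisms}. Cowellpowered w.r.t. Lawvere covarieties means that for each algebraic theory $\mathcal{T}$, the class of Lawvere covarieties with source $\mathcal{T}$ is a set up to the natural notion of isomorphism (i.e., there is a set $\mathcal{S}$ such that every Lawvere covariety $M:\mathcal{T} \to \mathcal{Q}$ is isomorphic, as a morphism out of $\mathcal{T}$, to some element of $\mathcal{S}$).

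First I would fix an algebraic theory $\mathcal{T}$ and invoke the characterization: every Lawvere covariety with source $\mathcal{T}$ is isomorphic to a canonical quotient morphism of the form $\mathcal{T} \to \mathcal{T}/\sim$, for some congruence $\sim$ on $\mathcal{T}$ in the sense of \cite[Definition 10.4]{adámek_rosický_vitale_lawvere_2010}. Moreover, distinct congruences give canonical morphisms that are not isomorphic over $\mathcal{T}$, since the congruence can be recovered from the morphism as the kernel relation. Thus the isomorphism classes of Lawvere covarieties with source $\mathcal{T}$ are in bijection with the congruences on $\mathcal{T}$.

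Next I would observe that, because $\mathcal{T}$ is a small category (that is the standing assumption on algebraic theories), the collection of morphisms of $\mathcal{T}$ is a set, and hence any congruence $\sim$ on $\mathcal{T}$ is a subset of the set $\mathrm{Mor}(\mathcal{T}) \times \mathrm{Mor}(\mathcal{T})$. Consequently, the totality of congruences on $\mathcal{T}$ is a subset of the powerset $\mathcal{P}(\mathrm{Mor}(\mathcal{T}) \times \mathrm{Mor}(\mathcal{T}))$, which is a set. Combining the two steps gives a set of representatives for the isomorphism classes of Lawvere covarieties with source $\mathcal{T}$, concluding the argument.

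There is essentially no hard step here: once the characterization theorem is in hand, the whole content of the corollary is the smallness of $\mathcal{T}$, which is built into the definition of algebraic theory. The only mild subtlety worth checking is that the correspondence between Lawvere covarieties out of $\mathcal{T}$ and congruences on $\mathcal{T}$ respects isomorphism in the appropriate direction, which is immediate from the universal property exhibited in the proof of Theorem \ref{Lawvere covariety morphisms}.
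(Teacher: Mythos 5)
Your proof is correct and follows essentially the same route as the paper: reduce, via the characterization of Lawvere covarieties as canonical quotients $\mathcal{T} \to \mathcal{T}/\sim$, to the smallness of the class of congruences on the small category $\mathcal{T}$. The extra observation that distinct congruences yield non-isomorphic quotient morphisms is true but not needed for cowellpoweredness, which only requires a surjection from congruences onto isomorphism classes.
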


\begin{proof}
    Let $M:\mathcal{T} \to \mathcal{Q}$ be a Lawvere covariety, and let $M':\mathcal{T} \to \mathcal{T}/\approx_M$ be the canonical morphisms of theories. Then, there exists a unique morphism of theories $P$ such that $PM' = M$. Since $M$ is full and bijective on objects, we have $P$ is an isomorphism of theories, so $M$ and $M'$ are isomorphic. Therefore, the claim of this corollary is true since the class of all congruences on $\mathcal{T}$ is small. 
\end{proof}

To conclude this note we are going to study some relations between algebraic theories and algebraic categories, w.r.t. varieties and covarieties. 

\begin{lemma}\label{lemma Birkhoff Lawvere varieties}
    Let $\mathcal{T}$ be an algebraic theory and let $F$ be an algebraic functor with target $Alg \, \mathcal{T}$. Then  $F$ is isomorphic to  $\mathcal{V} \hookrightarrow Alg \, \mathcal{T}$ for some subvariety $\mathcal{V}$ of $Alg \, \mathcal{T}$ if only if $F$ is isomorphic to $Alg\, M :Alg \, \mathcal{Q} \to Alg \, \mathcal{T}$ for some Lawvere covariety $M:Alg \, \mathcal{T} \to Alg \, \mathcal{Q}$.
\end{lemma}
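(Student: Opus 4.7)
The plan is to use congruences on $\mathcal{T}$ in the sense of \cite[Definition 10.4]{adámek_rosický_vitale_lawvere_2010} as the common index between both sides. On the algebraic-theory side, the Characterization of Lawvere covarieties (proved via Theorem \ref{Lawvere covariety morphisms}) tells me that every Lawvere covariety out of $\mathcal{T}$ is isomorphic in $\mathbf{AlgTh}$ to a canonical projection $M_\sim:\mathcal{T}\to \mathcal{T}/\sim$ for a unique congruence $\sim$. On the algebraic-category side, I will invoke the bijective correspondence between subvarieties of $Alg\,\mathcal{T}$ and congruences on $\mathcal{T}$ from \cite[Chapter 10]{adámek_rosický_vitale_lawvere_2010}, under which the subvariety associated to $\sim$ is precisely the essential image of $Alg\,M_\sim:Alg(\mathcal{T}/\sim)\to Alg\,\mathcal{T}$. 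Composing these two dictionaries yields the claimed equivalence.

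For the forward implication, assume $F\cong (\mathcal{V}\hookrightarrow Alg\,\mathcal{T})$ for some subvariety $\mathcal{V}$. I extract the congruence $\sim$ on $\mathcal{T}$ by declaring $f\sim g$ iff $A(f)=A(g)$ for every $A\in\mathcal{V}$, let $M:=M_\sim$, and note that $M$ is a Lawvere covariety by Characterization of Lawvere covarieties. I then verify that $Alg\,M$ is fully faithful: since $M$ is full and bijective on objects, a natural transformation of $\mathcal{T}$-algebras that lives between algebras in the essential image is automatically natural with respect to $\mathcal{T}/\sim$. Combined with the cited correspondence, the essential image of $Alg\,M$ is exactly $\mathcal{V}$, so $Alg\,M$ and the inclusion $\mathcal{V}\hookrightarrow Alg\,\mathcal{T}$ are isomorphic as functors into $Alg\,\mathcal{T}$, hence both are isomorphic to $F$. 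For the backward implication, assume $F\cong Alg\,M$ for some Lawvere covariety $M:\mathcal{T}\to \mathcal{Q}$. By Theorem \ref{Lawvere covariety morphisms} I may replace $M$ up to iso by a canonical projection $M_\sim$, and then the cited bijection identifies the essential image of $Alg\,M_\sim$ with a subvariety $\mathcal{V}$ of $Alg\,\mathcal{T}$, giving the required isomorphism of $F$ with $\mathcal{V}\hookrightarrow Alg\,\mathcal{T}$.

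The main obstacle is the technical identification of $Alg\,M_\sim$ with the inclusion of the corresponding subvariety. I need both that the essential image of $Alg\,M_\sim$ is closed under products, subalgebras, regular quotients, and directed unions in $Alg\,\mathcal{T}$, and the converse that every subvariety arises in this way. This is essentially the content of the congruence-subvariety correspondence underlying Birkhoff's variety theorem in \cite[Chapter 10]{adámek_rosický_vitale_lawvere_2010}. Without appealing to that reference I would have to verify closure under the four HSP-like operations directly for algebras respecting $\sim$, and conversely use that $\mathcal{V}$ is regular epireflective in $Alg\,\mathcal{T}$ (as exploited in the proof of Theorem \ref{main theorem}) to check that the congruence generated by $\{(f,g) : A(f)=A(g)\text{ for all }A\in\mathcal{V}\}$ recovers $\mathcal{V}$.
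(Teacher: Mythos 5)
Your proposal is correct and follows essentially the same route as the paper: both directions are mediated by the congruence--subvariety correspondence of \cite[Chapter 10]{adámek_rosický_vitale_lawvere_2010} (the paper cites Corollary 10.15 directly) together with the identification of Lawvere covarieties with canonical quotients $\mathcal{T}\to\mathcal{T}/\sim$ from Theorem \ref{Lawvere covariety morphisms}. You merely unfold the content of the cited correspondence in more detail than the paper does.
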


\begin{proof}
    The necessity follows by \cite[Corollary 10.15]{adámek_rosický_vitale_lawvere_2010} and Theorem \ref{Lawvere covariety morphisms}. Conversely, let $M':\mathcal{T} \to \mathcal{T}/\approx_{M}$, so $M$ and $M'$ are isomorphic, then $Alg \, M$ and $Alg \, M'$ are also isomorphic, and $Alg \, M'$ is isomorphic to $\mathcal{V} \hookrightarrow Alg \, \mathcal{T}$ for some subvariety $\mathcal{V}$ of $Alg \, \mathcal{T}$ by \cite[Corollary 10.15]{adámek_rosický_vitale_lawvere_2010}.
\end{proof}

\begin{lemma}\label{2do lemma Birkhoff Lawvere variety}
    Let $L:\mathcal{A} \to \mathcal{B}$ be an equivalence functor between algebraic categories. Then, for each subvariety $\mathcal{V}$ of $\mathcal{A}$ there exist a subvariety $\mathcal{W}$ of $\mathcal{B}$ and an equivalence functor $K:\mathcal{V} \to \mathcal{W}$ such that the following diagram is a pullback square 
    \[
    \begin{tikzcd}
\mathcal{V} \arrow[dd, "K"] \arrow[rr, "V", hook] &  & \mathcal{A} \arrow[dd, "L"] \\
                                                  &  &                             \\
\mathcal{W} \arrow[rr,"W", hook]                      &  & {\mathcal{B}}       
\end{tikzcd}
\]
\end{lemma}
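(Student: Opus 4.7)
My plan is to use Theorem \ref{main theorem} to transport the defining Birkhoff equations of $\mathcal{V}$ along a quasi-inverse of $L$. Let $L':\mathcal{B} \to \mathcal{A}$ be a quasi-inverse of $L$; it is algebraic since equivalences preserve all limits and colimits. By Theorem \ref{main theorem}, the inclusion $V:\mathcal{V} \hookrightarrow \mathcal{A}$ is a general solution of some system $E = \{P_i \approx Q_i\}_{i \in I}$ of Birkhoff equations on $\mathcal{A}$. Since the class of Birkhoff equations is closed under right action by algebraic functors (the observation following Definition \ref{birkhoff equation}), the system $EL' = \{P_i L' \approx Q_i L'\}_{i \in I}$ is a system of Birkhoff equations on $\mathcal{B}$; applying Theorem \ref{main theorem} a second time, let $W:\mathcal{W} \hookrightarrow \mathcal{B}$ be the subvariety of $\mathcal{B}$ that is a general solution of $EL'$.

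Next I would establish the key characterization $\mathcal{W} = \{y \in \mathcal{B} : L'(y) \in \mathcal{V}\}$, which is immediate from the definition of $\mathcal{W}$ via $EL'$. Using that subvarieties are replete (they are full subcategories closed under iso-subalgebras) together with the natural isomorphism $L'L \cong 1_\mathcal{A}$, this yields: for any $x \in \mathcal{A}$, $L(x) \in \mathcal{W}$ if and only if $L'L(x) \in \mathcal{V}$ if and only if $x \in \mathcal{V}$. Consequently $L$ restricts to a functor $K:\mathcal{V} \to \mathcal{W}$ with $WK = LV$, and $K$ is fully faithful (as a restriction of the fully faithful $L$) and essentially surjective (for $y \in \mathcal{W}$, the object $L'(y) \in \mathcal{V}$ satisfies $LL'(y) \cong y$), hence an equivalence.

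To conclude, I would verify the pullback property directly. The strict pullback of $W$ along $L$ consists of the objects $x \in \mathcal{A}$ with $L(x) \in \mathcal{W}$, which by the above characterization is exactly the object class of $\mathcal{V}$; fullness of both $\mathcal{V}$ and $\mathcal{W}$ reduces the morphism condition in the pullback to a membership condition on the endpoints, and this gives precisely the morphisms of $\mathcal{V}$. The universal property is then routine: any cone $(H:\mathcal{D} \to \mathcal{A},\, J:\mathcal{D} \to \mathcal{W})$ with $LH = WJ$ factors uniquely through $\mathcal{V}$, since for each $d \in \mathcal{D}$ we have $L(H(d)) = J(d) \in \mathcal{W}$ and hence $H(d) \in \mathcal{V}$.

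The step I expect to be most delicate is the interplay between the strict pullback in \textbf{Cat} and the weakness of the equivalence $L$: the argument turns the coherent isomorphism $L'L \cong 1_\mathcal{A}$ into a genuine equivalence of memberships by leaning on repleteness of subvarieties. Should that approach prove fragile, an alternative bypassing Theorem \ref{main theorem} is to define $\mathcal{W}$ outright as the full subcategory $\{y \in \mathcal{B} : L'(y) \in \mathcal{V}\}$ and to check closure under products, subalgebras, regular quotients, and direct unions using that $L'$ both preserves and reflects each of these constructions, as a consequence of being an equivalence.
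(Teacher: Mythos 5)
Your proposal is correct, and it arrives at the same category $\mathcal{W}=\{y\in\mathcal{B} : L'(y)\in\mathcal{V}\}$ and the same restricted functor $K$ as the paper, but it reaches the crucial intermediate fact --- that $\mathcal{W}$ is a subvariety of $\mathcal{B}$ --- by a genuinely different route. The paper defines $\mathcal{W}$ directly via the quasi-inverse and verifies by hand that it is closed under subalgebras, limits, regular quotients, and sifted colimits (this is exactly the ``alternative'' you sketch in your last paragraph); you instead invoke Theorem \ref{main theorem} to present $\mathcal{V}$ by a system $E$ of Birkhoff equations, act on $E$ by the algebraic functor $L'$ using the closure observation after Definition \ref{birkhoff equation}, and read off $\mathcal{W}$ as the general solution of $EL'$. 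Your route is shorter and reuses existing machinery, at the cost of one point worth making explicit: you need the solution category of $E$ to be \emph{literally} $\mathcal{V}$, not merely isomorphic to it over $\mathcal{A}$. This does hold --- the converse half of the proof of Theorem \ref{main theorem} shows $\mathcal{V}=\mathcal{V}'$ on the nose, and in any case two replete full subcategories whose inclusions are isomorphic over $\mathcal{A}$ coincide --- but it is the kind of strictness issue that deserves a sentence. Your treatment of the equivalence (fully faithful plus essentially surjective, via repleteness and $L'L\cong 1_\mathcal{A}$) is lighter than the paper's explicit adjoint equivalence $(\tau,\theta):K\vdash J$, and your verification of the pullback square is essentially the paper's, though you should note (as the paper implicitly does) that the mediating functor out of an algebraic cone is itself algebraic because $V$ is a full inclusion reflecting limits and sifted colimits, so the square is a pullback in \textbf{AlgCat} and not only in \textbf{Cat}.
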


\begin{proof}
    Denote by $V:\mathcal{V} \to \mathcal{A}$ to the inclusion of $\mathcal{V}$ into $\mathcal{A}$. Let $R:\mathcal{B} \to \mathcal{A}$ be an equivalence functor and let $(\eta,\varepsilon):L \vdash R$ be an adjoint equivalence. Define $\mathcal{W}$ as the full subcategory of $\mathcal{B}$ such that $x$ belongs to $\mathcal{W}$ if only if $Rx$ belongs to $\mathcal{V}$. Denote by $W:\mathcal{W} \hookrightarrow  \mathcal{B}$ to the inclusion of $\mathcal{W}$ into $\mathcal{B}$. Thus, we get a functor $J:\mathcal{W} \to \mathcal{V}$ such that $VJ = RW$. Observe that if $x \in \mathcal{V}$ then $Lx \in \mathcal{W}$. Indeed, since $\eta_x : x \to RLx$ is an isomorphism and $x \in \mathcal{V}$, then $RLx \in \mathcal{V}$, thus $Lx \in \mathcal{W}$. Therefore, we get a functor $K:\mathcal{V} \to \mathcal{W}$ such that $WK = LV$. Define natural transformations $\tau:1_\mathcal{V} \to JK$ and $\theta:KJ \to 1_\mathcal{W}$ such that $V \tau = \eta V$ and $W \theta = \varepsilon W$. It is easily seen that $(\tau,\theta):K \vdash J$ is an adjoint equivalence. 

    Now we are going to prove that $\mathcal{W}$ is a subvariety of $\mathcal{B}$. Let $x \in \mathcal{W}$ and $m:y \to x$ be a monomorphism in $Alg \, \mathcal{T}$. Thus, $Rm:Ry \to Rx$ is a monomorphism and $Rx \in \mathcal{V}$, then $Ry \in \mathcal{V}$ and $y \in \mathcal{W}$. Therefore, $\mathcal{W}$ is closed under subalgebras. Let $\{\nu_i : x_i \to y \}_{i \in D}$ be a limit cone in $\mathcal{B}$ such that $x_i \in \mathcal{W}$ for all $i \in D$. Thus, $\{Rv_i : Rx_i \to Ry \}_{i \in D}$ is a limit cone in $\mathcal{A}$ and $Rx_i \in \mathcal{V}_i$ for all $i \in D$, so $Ry \in \mathcal{V}$ and $y \in \mathcal{W}$. Therefore, $\mathcal{W}$ is closed under limits. Analogously we prove that $\mathcal{W}$ is closed under regular quotients and sifted colimits, so $\mathcal{W}$ is a subvariety of $\mathcal{B}$. It follows that $J$ and $K$ are algebraic functors. 

    Let us check that $\mathcal{W} \xleftarrow{J} \mathcal{V} \overset{V}{\hookrightarrow} \mathcal{A}$ is a pullback of $\mathcal{W} \overset{W}{\hookrightarrow} \mathcal{B} \xleftarrow{L} \mathcal{A}$. Suppose that $F:\mathcal{C} \to  \mathcal{A}$ and $G:\mathcal{C} \to \mathcal{W}$ are algebraic functors such that $WG=LF$. Let $x \in \mathcal{C}$ and $y = Fx$, so $Ly = WGx \in \mathcal{W}$, $RLy \in \mathcal{V}$, and $y \in \mathcal{V}$. Therefore, we get an algebraic functor $R:\mathcal{C} \to \mathcal{V}$ such that $F=VR$. Now, $WKR = LVR = L F = G W$, then $KR = G$ since $W$ is a monomorphism. It is clear that $R$ is unique with $F=VR$ and $G= KR$ due to $V$ is a monomorphism. 
\end{proof}

\begin{theorem}\label{Lawvere Birkhoff varieties}
    An algebraic functor $F:\mathcal{V} \to \mathcal{A}$ is a Birkhoff variety if only if there exist a Lawvere covariety $M:\mathcal{T} \to \mathcal{Q}$ and algebraic functors $\mathcal{B} \to Alg \, \mathcal{Q}$, $\mathcal{A} \to Alg \, \mathcal{T}$ such that following diagram is a pullback square
    \[
    \begin{tikzcd}
        \mathcal{B} \arrow[dd] \arrow[rr, "F"]        &  & \mathcal{A} \arrow[dd] \\
                                              &  &                        \\
        {Alg \, \mathcal{Q}} \arrow[rr, "{Alg \, M}"] &  & {Alg \, \mathcal{T}}  
    \end{tikzcd}
    \]
\end{theorem}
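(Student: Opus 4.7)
The plan is to deduce both directions from the two preceding lemmas together with the stability of Birkhoff varieties under pullback along algebraic functors. The latter is an instance of Proposition \ref{closed class of equations}(i) applied to the class of Birkhoff equations: that class is closed under right action by algebraic functors, because if $U$ is algebraic, faithful, conservative and amnestic with $UP=UQ$, then for any algebraic $R$ one still has $U(PR)=U(QR)$ with the same $U$ witnessing the Birkhoff condition. So pullbacks in \textbf{AlgCat} of Birkhoff varieties along algebraic functors are again Birkhoff varieties (and the relevant pullbacks exist in \textbf{AlgCat}, as already used in Theorem \ref{regularmonobirkhoff}).

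For the direct implication, assume $F:\mathcal{B}\to\mathcal{A}$ is a Birkhoff variety. By Characterization of Birkhoff varieties \ref{Characterization of Birkhoff varieties}, I would first replace $F$, up to isomorphism, by an inclusion $\mathcal{V}\hookrightarrow\mathcal{A}$ of a subvariety. Since $\mathcal{A}$ is algebraic, fix an equivalence $L:\mathcal{A}\to Alg\,\mathcal{T}$. Lemma \ref{2do lemma Birkhoff Lawvere variety} then yields a subvariety $\mathcal{W}$ of $Alg\,\mathcal{T}$, an equivalence $K:\mathcal{V}\to\mathcal{W}$, and a pullback square whose lower edge is the inclusion $\mathcal{W}\hookrightarrow Alg\,\mathcal{T}$. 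Next I invoke Lemma \ref{lemma Birkhoff Lawvere varieties} to rewrite that inclusion, up to isomorphism, as $Alg\,M:Alg\,\mathcal{Q}\to Alg\,\mathcal{T}$ for some Lawvere covariety $M:\mathcal{T}\to\mathcal{Q}$. Pasting the two commutative squares (absorbing the isomorphism into the vertical functor $\mathcal{B}\to Alg\,\mathcal{Q}$) gives the required pullback, since composing a pullback square with an isomorphism is still a pullback.

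For the converse, assume $F$ sits in the pullback square of the statement. By Lemma \ref{lemma Birkhoff Lawvere varieties}, $Alg\,M$ is isomorphic to the inclusion of some subvariety of $Alg\,\mathcal{T}$, so by Characterization of Birkhoff varieties \ref{Characterization of Birkhoff varieties} it is a Birkhoff variety. Applying stability under pullback along the algebraic functor $\mathcal{A}\to Alg\,\mathcal{T}$ gives that $F$ is a Birkhoff variety as well.

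The only delicate step is the pasting in the forward direction: I must ensure that when I replace $\mathcal{W}\hookrightarrow Alg\,\mathcal{T}$ by the isomorphic $Alg\,M$, the resulting square is still a genuine pullback in \textbf{AlgCat} and that the vertical comparison functor $\mathcal{B}\to Alg\,\mathcal{Q}$ is algebraic. This is routine because algebraic equivalences compose with algebraic functors, and pullbacks in \textbf{AlgCat} are created by the inclusion into \textbf{Cat}; so the only real obstacle is bookkeeping rather than any new category-theoretic content.
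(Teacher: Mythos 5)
Your proposal is correct and follows essentially the same route as the paper: factor $F$ through the inclusion of a subvariety, transport that subvariety along an equivalence $L:\mathcal{A}\to Alg\,\mathcal{T}$ via Lemma \ref{2do lemma Birkhoff Lawvere variety}, identify the resulting inclusion with some $Alg\,M$ via Lemma \ref{lemma Birkhoff Lawvere varieties}, and paste; the converse likewise uses Lemma \ref{lemma Birkhoff Lawvere varieties} together with stability of Birkhoff varieties under pullback from Proposition \ref{closed class of equations}. The only difference is cosmetic bookkeeping (you absorb the isomorphisms into the vertical comparison functor, the paper writes it explicitly as $HKG$).
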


\begin{proof}
    Let us factorize $F$ as $F=VG$, where $\mathcal{V}$  is a subvariety of $\mathcal{A}$, $V:\mathcal{V} \hookrightarrow \mathcal{A}$ is the inclusion of $\mathcal{V}$ into $\mathcal{A}$, and $G:\mathcal{B} \to \mathcal{V}$ is an isomorphism. Since $\mathcal{A}$ is an algebraic category, there exists an algebraic theory $\mathcal{T}$ and an adjoint equivalence $(\eta,\varepsilon): L \vdash R$, where $R:Alg \,\mathcal{T} \to \mathcal{A}$ and $L:\mathcal{A} \to Alg \, \mathcal{T}$. 

    By Lemma \ref{2do lemma Birkhoff Lawvere variety} we found a subvariety $\mathcal{W}$ of $Alg \, \mathcal{T}$ and an algebraic functor $K:\mathcal{V} \to \mathcal{W}$ such that  $\mathcal{W} \xleftarrow{J} \mathcal{V} \overset{V}{\hookrightarrow} \mathcal{A}$ is a pullback of $\mathcal{W} \overset{W}{\hookrightarrow} Alg \, \mathcal{T} \xleftarrow{L} \mathcal{A}$. By Lemma \ref{lemma Birkhoff Lawvere varieties} we have a Lawvere covariety $M:\mathcal{T} \to \mathcal{Q}$ and an isomorphism $H: \mathcal{W} \to Alg \, \mathcal{Q}$ such that $W = Alg \, M H $. It is easy to check that  
      \[
    \begin{tikzcd}
\mathcal{B} \arrow[rr, "F"] \arrow[dd, "HKG"] &  & \mathcal{A} \arrow[dd, "L"] \\
                                              &  &                             \\
{Alg \, \mathcal{Q}} \arrow[rr, "{Alg \, M}"] &  & {Alg \, \mathcal{T}}       
\end{tikzcd}
    \]
    is a pullback square.

    Conversely, it follows by Lemma \ref{lemma Birkhoff Lawvere varieties} that $Alg \, M$ is a Birkhoff variety. Since Birkhoff varieties are closed under pullbacks (see Proposition \ref{closed class of equations}), we conclude that $F$ is also a Birkhoff variety. 
    \end{proof}

\begin{corollary}
\textbf{AlgCat} is wellpowered w.r.t. Birkhoff varieties.
\end{corollary}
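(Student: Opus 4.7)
The plan is to reduce the claim, via the correspondence between Birkhoff varieties and Lawvere covarieties developed in this section, to the already established fact that \textbf{AlgTh} is cowellpowered with respect to Lawvere covarieties. The first step will be to note that by Characterization of Birkhoff varieties \ref{Characterization of Birkhoff varieties}, every Birkhoff variety with codomain $\mathcal{A}$ is isomorphic, as a morphism into $\mathcal{A}$, to the inclusion $\mathcal{V}\hookrightarrow\mathcal{A}$ of a uniquely determined subvariety $\mathcal{V}$ of $\mathcal{A}$. Consequently, the task reduces to showing that for every algebraic $\mathcal{A}$ the class of subvarieties of $\mathcal{A}$ is a set.

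Next, I would fix an equivalence $L:\mathcal{A}\to Alg\,\mathcal{T}$ with quasi-inverse $R$, which exists because $\mathcal{A}$ is algebraic. Applying Lemma \ref{2do lemma Birkhoff Lawvere variety} to $L$, each subvariety $\mathcal{V}$ of $\mathcal{A}$ produces a subvariety $\mathcal{W}=\{x\in Alg\,\mathcal{T}\mid Rx\in\mathcal{V}\}$ of $Alg\,\mathcal{T}$. Because $L$ is essentially surjective and $\mathcal{V}$ is closed under isomorphism, one recovers $\mathcal{V}=\{y\in\mathcal{A}\mid Ly\in\mathcal{W}\}$, so the assignment $\mathcal{V}\mapsto\mathcal{W}$ is injective. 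Therefore it suffices to bound the size of the class of subvarieties of $Alg\,\mathcal{T}$.

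Finally, I would invoke Lemma \ref{lemma Birkhoff Lawvere varieties} to obtain that every subvariety inclusion $\mathcal{W}\hookrightarrow Alg\,\mathcal{T}$ is isomorphic, as an algebraic functor into $Alg\,\mathcal{T}$, to $Alg\,M$ for some Lawvere covariety $M:\mathcal{T}\to\mathcal{Q}$. This produces a surjection from the isomorphism classes of Lawvere covarieties out of $\mathcal{T}$ onto the class of subvarieties of $Alg\,\mathcal{T}$. Since by the preceding corollary the domain of this surjection is a set, so is its range, completing the argument. No step strikes me as a genuine obstacle: each of Theorem \ref{main theorem}, Lemma \ref{2do lemma Birkhoff Lawvere variety}, and Lemma \ref{lemma Birkhoff Lawvere varieties} is already phrased in exactly the form required, and the only real care required is in tracking which notion of isomorphism is being invoked at each stage, in particular in verifying that the map $\mathcal{V}\mapsto\mathcal{W}$ is genuinely injective on subvarieties rather than only on equivalence classes.
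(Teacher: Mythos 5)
Your argument is correct and follows essentially the same route as the paper: reduce wellpoweredness to the smallness of the class of subvarieties of $\mathcal{A}$, transfer along an equivalence $\mathcal{A}\simeq Alg\,\mathcal{T}$, and bound the subvarieties of $Alg\,\mathcal{T}$ by the (small) class of congruences on $\mathcal{T}$ — you route the last step through the cowellpoweredness corollary for \textbf{AlgTh} and Lemma \ref{lemma Birkhoff Lawvere varieties}, while the paper asserts the smallness directly and cites Theorem \ref{Lawvere Birkhoff varieties} for the transfer, but these rest on the same facts. Your version is somewhat more explicit about the injectivity of the transport $\mathcal{V}\mapsto\mathcal{W}$, which the paper leaves implicit.
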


\begin{proof}
For a given algebraic theory $\mathcal{T}$, the class of all subvarieties of $Alg \, \mathcal{T}$ is small because the the class of all morphisms in $\mathcal{T}$ is small. Then, the class of all Birkhoff varieties with target $Alg \, \mathcal{T}$ is small, up to isomorphisms, by Theorem \ref{main theorem}. Therefore, the claim of this corollary follows by Theorem \ref{Lawvere Birkhoff varieties}. 
\end{proof}

\section{Some open problems}

In the following list, we mention some open problems which are worth to investigate:

\begin{enumerate}
    \item In \textbf{Cat}, are the conservative embeddings precisely the regular monomorphisms?
    \item Is the class of Birkhoff equations the largest solution to the inverse main problem of varieties for the class of inclusions of varieties of algebraic categories? Or equivalently, is the class of Birkhoff equations closed under equivalences? Is the largest solution closed under action by algebraic functors?
    \item Could the results and proofs of the previous section to be generalize to the category of locally finitely presentable categories with right adjoint functors as morphisms, for an appropriate definition of varieties?

    \item With respect to algebraic geometry, it would be very appropriate as an application of this theory of varieties, to find a solution to the inverse main problem in the category of schemes over an algebraically closed field $k$, for the class of algebraic varieties over $k$, i.e., the class of reduced separated schemes of finite type over $k$. Examples \ref{forma differential varieties}, \ref{affine varieties} and \ref{projective varieties} are a motivation to this problem.  
\end{enumerate}

\bibliographystyle{amsalpha}
\bibliography{References}

\end{document}